\theoremstyle{plain}
\newtheorem{thm}{Theorem}[section]
\newtheorem{lem}[thm]{Lemma}
\newtheorem{cor}[thm]{Corollary}
\theoremstyle{definition}
\newtheorem{defn}[thm]{Definition}
\theoremstyle{definition}
\newcommand{\C}{\mathbb{C}}
\newcommand{\Nzero}{\mathbb{N}\cup \{0\}}
\newcommand{\R}{\mathbb{R}}
\newcommand{\Z}{\mathbb{Z}}
\newcommand{\calB}{\mathcal{B}}
\newcommand{\calF}{\mathcal{F}}
\newcommand{\calM}{\mathcal{M}}
\newcommand{\calS}{\mathcal{S}}
\newcommand{\ichi}{\mathbf{1}}
\newcommand{\supp}{{\,\mathrm{supp}\,}}
\newcommand{\vs}{\vspace{6pt}}
\numberwithin{equation}{section}
\begin{document}

\title
{Notes on lattice bump Fourier multiplier operators 
on $L^2 \times L^2$} 

\author[T. Kato]{Tomoya Kato}
\author[A. Miyachi]{Akihiko Miyachi}
\author[N. Tomita]{Naohito Tomita}

\address[T. Kato]
{Division of Pure and Applied Science, 
Faculty of Science and Technology, Gunma University, 
Kiryu, Gunma 376-8515, Japan}

\address[A. Miyachi]
{Department of Mathematics, 
Tokyo Woman's Christian University, 
Zempukuji, Suginami-ku, Tokyo 167-8585, Japan}

\address[N. Tomita]
{Department of Mathematics, 
Graduate School of Science, Osaka University, 
Toyonaka, Osaka 560-0043, Japan}

\email[T. Kato]{t.katou@gunma-u.ac.jp}
\email[A. Miyachi]{miyachi@lab.twcu.ac.jp}
\email[N. Tomita]{tomita@math.sci.osaka-u.ac.jp}

\date{\today}
\subjclass[2010]{42B15, 42B35}
\keywords{Bilinear Fourier multiplier operators,
lattice bump functions}

%%%================================
\begin{abstract}
Given a smooth bump function,
we consider the multiplier formed by taking
the linear combination of the translations
of the bump function and the corresponding
bilinear Fourier multiplier operator.
Under certain condition on the bump function,
we give a complete characterization of
the coefficients of the linear combination
for which the corresponding bilinear operator
defines a bounded operator
from $L^2\times L^2$ to $L^2$-based amalgam spaces.
\end{abstract}

%%%=====================================

\maketitle

%%%%%%%%%%%%%%%%%%%%%%%%%%%%%%%%%%%%%%%%%%%%%%%%
\section{Introduction}
%%%%%%%%%%%%%%%%%%%%%%%%%%%%%%%%%%%%%%%%%%%%%%%%
\subsection{Background}
%%%%%%%%%%%%%%%%%%%%%%%%%%%%%%%%%%%%%%%%%%%%%%

For $\sigma \in L^{\infty}(\R^n \times \R^n)$, 
the bilinear Fourier multiplier 
operator $T_{\sigma}$ is defined by 
\begin{align*}
&T_{\sigma}(f, g)(x)
=
\iint_{\R^n \times \R^n} 
e^{2\pi i x \cdot (\xi + \eta)} 
\sigma (\xi, \eta) 
\widehat{f}(\xi)
\widehat{g}(\eta)\, 
d\xi d\eta, 
\\
& 
f, g \in \calS (\R^n), \; 
x \in \R^n.  
\end{align*}
We also write 
$T[\sigma]$ or $T[\sigma (\xi, \eta)]$
to denote 
$T_{\sigma}$. 
If $X, Y, Z$ are function spaces on $\R^n$ equipped with 
norms $\|\cdot \|_{X}, \|\cdot \|_{Y}, \|\cdot \|_{Z}$ respectively, 
then we define 
\begin{align*}
&
\|\sigma \|_{\calM (X \times Y \to Z)}
=
\|T_{\sigma}\|_{X \times Y \to Z}
\\
&=
\sup \{
\| T_{\sigma}(f,g)\|_{Z} 
\mid 
f \in \calS \cap X, \;
g \in \calS \cap Y, \;
\|f\|_{X}=\|g\|_{Y}=1
\}. 
\end{align*}
If $\|T_{\sigma}\|_{X \times Y \to Z}<\infty$, then, 
with a slight abuse of terminology,  
we shall say that $T_{\sigma}$ is bounded from $X\times Y$ to 
$Z$ and write 
$T_{\sigma}: X\times Y \to Z$.

%For $m \in L^{\infty}(\R^n)$, 
%we define the linear Fourier multiplier operator $m(D)$ by 
%\begin{align*}
%%&
%m(D) f (x)
%=
%\iint_{\R^n} 
%e^{2\pi i x \cdot \xi } 
%m (\xi) 
%\widehat{f}(\xi)\, 
%d\xi, 
%%\\
%%& 
%\quad 
%f\in \calS (\R^n), 
%\quad 
%x \in \R^n,  
%\end{align*}
%and for function spaces $X$ and $Y$ 
%on $\R^n$ we write 
%\begin{equation*}
%\|m \|_{\calM (X \to Y)}
%=
%\sup \{
%\| m(D) f\|_{Y} 
%\mid 
%f \in \calS \cap X, \;
%\|f\|_{X}=1
%\}. 
%\end{equation*}

One of the fundamental theorems for the boundedness of 
bilinear operators $T_{\sigma}$ in Lebesgue spaces 
is that if the multiplier $\sigma$ satisfies the condition 
\[
|\partial_{\xi}^{\alpha} \partial_{\eta}^{\beta} \sigma (\xi, \eta)|
\le C_{\alpha, \beta} (|\xi|+ |\eta|)^{-|\alpha| - |\beta|}
\]
then $T_{\sigma}: L^{p}\times L^q \to L^r$ 
for all $1<p, q\le \infty$ with $1/p + 1/q=1/r>0$. 
This theorem was proved by Coifman-Meyer \cite{CM}, 
Kenig-Stein \cite{Kenig-Stein}, and 
Grafakos-Torres \cite{GT}. 
%(Extension to the case $0<p, q \le \infty$ is also given in \cite{GK2001a}.)
This theorem may be considered as a natural extension 
of the well-known theorem about the linear Fourier multiplier 
operators satisfying H\"ormander-Mihlin type condition. 
However, if we consider 
other type of conditions, 
bilinear Fourier multiplier operators take properties 
that are different from the linear operators. 
One of such properties concerns 
with multipliers satisfying the condition 
\begin{equation}\label{S00}
|\partial_{\xi}^{\alpha} \partial_{\eta}^{\beta} \sigma (\xi, \eta)|
\le C_{\alpha, \beta}. 
\end{equation}
For the linear case, 
Plancherel's theorem simply
implies that the 
Fourier multiplier operator 
$f \mapsto \calF^{-1}(m\widehat{f})$ 
is bounded from $L^2$ to $L^2$ whenever 
$m\in L^{\infty}$. 
But for the bilinear case, 
B\'enyi-Torres 
\cite[Proposition 1]{BT-2004} proved that 
if $1\le p, q, r<\infty$ satisfy $1/p + 1/q = 1/r$ 
then there exists a multiplier $\sigma$ satisfying the condition 
\eqref{S00} for which the corresponding 
operator $T_{\sigma}$ is not bounded 
from $L^p \times L^q$ to $L^r$.

The subject of the present paper concerns with 
bilinear multipliers satisfying the condition \eqref{S00}. 
We shall consider the boundedness of $T_{\sigma}$ 
only on $L^2 \times L^2$. 
The above result of 
B\'enyi-Torres 
\cite{BT-2004} implies 
that we can expect the 
boundedness of $T_{\sigma}$ 
only if we strengthen the condition \eqref{S00}. 
So far several such results are known. 
As far as the boundedness 
of operators $T_{\sigma}$ on $L^2 \times L^2$ 
is concerned, it seems that the most 
general result known so far is the theorem given in 
Kato-Miyachi-Tomita \cite{KMT-arXiv}, which reads as follows. 
For nonnegative function $W$ on $\Z^n \times \Z^n$, 
the class $BS_{0,0}^{W}(\R^n)$ is 
defined to be all 
$C^{\infty}$ functions $\sigma$ on $\R^n \times \R^n$ 
satisfying the estimate 
\begin{equation*}%\label{S00W}
|\partial_{\xi}^{\alpha} \partial_{\eta}^{\beta} \sigma (\xi, \eta)|
\le C_{\alpha, \beta} 
\sum_{\mu, \nu \in \Z^n} 
W(\mu, \nu) \ichi_{R}(\xi - \mu) \ichi_{R} (\eta - \nu), 
\end{equation*}
where 
$R=[-1/2, 1/2)^{n}$ 
(this notation is slightly different from the one given in \cite{KMT-arXiv}). 
The theorem of \cite[Theorem 1.3]{KMT-arXiv} asserts that 
$T_{\sigma}: L^2 \times L^2 \to L^r$, $1\le r\le 2$,  
for all 
$\sigma\in BS_{0,0}^{W}(\R^n)$ 
if and only if 
\begin{equation}\label{calBB}
\|W\|_{\calB}
=
\sup 
\sum_{\mu, \nu \in \Z^n} W(\mu, \nu) F(\mu) G(\nu) H(\mu+\nu) 
< \infty, 
\end{equation}
where the $\sup$ is taken over all 
nonnegative sequences $F, G, H \in \ell^{2}(\Z^n)$ with 
$\|F\|_{\ell^2}=\|G\|_{\ell^2}=\|H\|_{\ell^2}=1$. 
It is known that all nonnegative 
$W$ in the Lorentz class $\ell^{4, \infty}(\Z^{2n})$  
satisfies the condition \eqref{calBB}; see \cite[Proposition 3.4]{KMT-arXiv}. 
In particular $W(\mu, \nu) = (1+|\mu|+|\nu|)^{-n/2}$ is a typical 
example of $W$ satisfying \eqref{calBB}. 
The $L^2\times L^2 \to L^1$ estimates for 
operators in the class 
$BS_{0,0}^{W}(\R^n)$ with 
$W(\mu, \nu) = (1+|\mu|+|\nu|)^{-n/2}$ is also given 
in \cite{MT-2013},  
where estimates in other function spaces are given as well. 
Slav\'{i}kov\'{a} \cite{Slavikova} also 
gives the $L^2\times L^2 \to L^1$ estimates 
of $T_{\sigma}$ for $\sigma\in BS_{0,0}^{W}(\R^n)$ 
with $W \in \ell^{4, \infty}(\Z^{2n})$  
even under restricted smoothness assumptions. 
Related results are also given in Grafakos-He-Slav\'{i}kov\'{a} \cite{GHS}. 
%and 
%\cite{BGHH}. 

%It should be remarked that 
%the theorem \cite[Theorem 1.3]{KMT-arXiv} concerns with 
%the boundedness 
%$T_{\sigma}: L^2 \times L^2 \to L^r$ for each $1\le r\le 2$ 
%but the condition \eqref{calBB} 
%is independent of 
%the number $r$. 

Now the theorem  
\cite[Theorem 1.3]{KMT-arXiv} 
is sharp in itself 
but it is a theorem that is concerned with 
a class of multipliers not with an individual multiplier.  
In the present paper, we shall give a theorem 
that treats individual multiplier, 
although we strongly restrict the form of multipliers. 
Here we recall the work of 
Grafakos and Kalton \cite{GK-Marcinkiewicz} that
considers a problem in the same spirit.

In \cite{GK-Marcinkiewicz}, the authors consider 
the multiplier $\sigma$ of the form  
\[
\sigma_{A} (\xi, \eta) 
=
\sum_{j, k \in \Z} a_{j,k} \phi (2^{-j} \xi) \phi (2^{-k}\eta), 
\quad \xi, \eta  \in \R^n, 
\]
where 
$A=(a_{j,k})_{j,k\in Z}$ is a infinite matrix of complex numbers 
and $\phi$ is a function in $C_{0}^{\infty}(\R^n)$ such 
that $\supp \phi \subset \{2^{-1} \le |\xi|\le 2\}$  
and 
$\sum_{j \in \Z} \phi (2^{-j} \xi) = 1$ for all $\xi \in \R^n 
\setminus \{0\}$. 
The authors of \cite{GK-Marcinkiewicz} introduce a norm 
$H(A)$ for infinite matrices $A$ 
and prove  
the inequality 
\begin{equation}\label{GKHA}
c^{-1} H(A) \le 
\|T_{\sigma_A}\|_{H^p \times H^q \to L^r} 
\le 
c H(A), 
\end{equation}
where 
$H^p$, $0<p<\infty$, 
denotes Hardy spaces on $\R^n$ 
and $p,q,r$ are 
positive real numbers 
satisfying $1/p + 1/q = 1/r$ 
(see \cite[Theorem 6.5]{GK-Marcinkiewicz}).  
Here we do not give the definition of 
$H(A)$ but we mention that it depends only on 
$A$ but not on $p,q,r$. 
As an application of this theorem, 
the authors give an example of a multiplier $\sigma (\xi, \eta)$ 
that satisfies the Marcinkiewicz type condition 
\[
|
\partial_{\xi}^{\alpha}
\partial_{\eta}^{\beta}
\sigma (\xi, \eta)| 
\le C_{\alpha} |\xi|^{-|\alpha|} |\eta|^{- |\beta|}
\]
but the corresponding operator 
$T_{\sigma}$ is not bounded 
in $H^p \times H^q \to L^r$ for 
any $0<p,q,r<\infty$ with $1/p + 1/q = 1/r$.

In the present paper, 
we shall give a theorem that is similar to \eqref{GKHA} 
in relation to multipliers satisfying the condition 
\eqref{S00}. 
Although our result is not directly related to 
the result of \cite{GK-Marcinkiewicz}, 
we shall use some ideas given in this paper.

A study of similar nature can be found in  
Buri\'ankov\'a-Grafakos-He-Honz\'ik 
\cite{BGHH}.

%%%%%%%%%%%%%%%%%%%%%%%%%%%%%%%%%%%%%%%%%%%%%%
\subsection{Main results}
%%%%%%%%%%%%%%%%%%%%%%%%%%%%%%%%%%%%%%%%%%%%%%
Now we shall give the statement of the 
main result of this paper.

If $a_{\mu, \nu}\in \C$ is given 
for each $(\mu, \nu)\in \Z^n \times \Z^n$, 
then we call $A=(a_{\mu, \nu})_{\mu, \nu \in \Z^n}$ 
a matrix on $\Z^n \times \Z^n$. 
If 
$\sup_{\mu, \nu} |a_{\mu, \nu}|<\infty$, 
then we say  
$A$ is an $L^{\infty}$ {\it matrix}. 
%If $a_{\mu, \nu}=0$ except for finite number of 
%$(\mu, \nu)$, then we call 
%$A$ a $c_{00}$ {\it matrix}. 

If $A=(a_{\mu, \nu})_{\mu, \nu \in \Z^n}$ is an $L^{\infty}$ matrix 
on $\Z^n \times \Z^n$, 
and if 
$\Phi \in C_{0}^{\infty}(\R^n \times \R^n)$, 
we define 
the function $\sigma_{A, \Phi}$ by 
\[
\sigma_{A,\Phi} (\xi, \eta)= \sum_{\mu, \nu \in \Z^n}
a_{\mu, \nu} \Phi (\xi - \mu, \eta - \nu), 
\quad 
\xi, \eta \in \R^n.  
\]
Notice that we always have 
$\sigma_{A, \Phi}\in L^{\infty}(\R^n \times \R^n)$.

The purpose of the present paper is to 
show that under certain condition on $\Phi$ 
we can completely characterize 
all $L^{\infty}$ matrices $A$ that satisfy 
\begin{equation}\label{abc}
T_{\sigma_{A, \Phi}} : L^2 \times L^2 \to 
(L^2, \ell^q), 
\quad 
1\le q \le \infty.  
\end{equation}
where $(L^2, \ell^q)$ is the $L^2$-based amalgam space. 
The precise definition of the amalgam space 
will be given in the next section. 
Our characterization of $A$ for \eqref{abc} 
does not depend on $q\in [1, \infty]$, 
which would be of independent interest.

We shall introduce a norm for $L^{\infty}$ matrices. 
For this, 
we write $\ell^2 = \ell^2 (\Z^n)$ to denote 
the class of all $F: \Z^n \to \C$ such that 
\[
\|F\|_{\ell^2}=\bigg(	\sum_{\mu \in \Z^n} |F(\mu)|^2 \bigg)^{1/2}<\infty.  
\]
We also write $\ell^2_0 = \ell^2_0 (\Z^n)$ to denote 
the class of 
all those $F\in \ell^{2}$ such that $F(\mu)=0$ except 
for a finite number of $\mu \in \Z^n$. 
The norm for $L^{\infty}$ matrices is defined 
as follows.

%%========================================
\begin{defn}\label{calB}
If 
$A=(a_{\mu, \nu})_{\mu, \nu \in \Z^n}$ is an $L^{\infty}$ matrix, 
then 
$\|A\|_{\calB}$ denotes  
the norm of the trilinear functional 
\[
(\ell^{2}_{0}(\Z^n))^3 
\ni (F,G,H) 
\mapsto 
\sum_{\mu, \nu \in \Z^n} 
a_{\mu, \nu}
F (\mu) G(\nu) H(\mu + \nu)
\in \C,  
\]
i.e., 
$\|A\|_{\calB}$ is the sup of 
\[
\bigg|
\sum_{\mu, \nu \in \Z^n} 
a_{\mu, \nu}
F (\mu) G(\nu) H(\mu + \nu)
\bigg|
\]
over all $F, G, H \in \ell^{2}_{0}(\Z^n)$ satisfying 
$\|F\|_{\ell^2}=
\|G\|_{\ell^2}=
\|H\|_{\ell^2}=1$. 
\end{defn}
%%==========================================

We also use the following. 
%%========================================
\begin{defn}\label{conditionA}
We say that a function $\Phi \in C_{0}^{\infty}(\R^{d})$ satisfies 
the condition (A) if 
there exists a 
function 
$\Theta \in C_{0}^{\infty}(\R^{d})$ such that 
\[
\int_{\R^{d}} \Theta (x ) \Phi (x - \alpha)\, dx 
=
\begin{cases}
1 & \text{if}\;\; \alpha = 0 \in \Z^{d}, 
\\
0 & \text{if} \;\; \alpha \in \Z^{d}\setminus \{0\}. 
\end{cases} 
\]
\end{defn}
%%===========================================

Obviously  
any nonzero function $\Phi \in C_{0}^{\infty}(\R^{d})$ 
with support included in the cube $[-1/2, 1/2]^d$ satisfies the condition (A). 
It may not be so obvious that there exists a nonzero 
$\Phi \in C_{0}^{\infty}(\R^{d})$ that does not satisfy the 
condition (A). 
In Section \ref{examples}, we shall give 
some examples of $\Phi$ that satisfy or do not satisfy 
the condition (A).

The following 
is the main result of this paper. 
%%===================================
\begin{thm}\label{main}
\rm{(1)} 
For every  
$\Phi \in C_{0}^{\infty}(\R^n \times \R^n)$, 
there exists a constant $c\in (0, \infty)$ depending only on 
$n$ and $\Phi$ such that 
\begin{equation}\label{lecalB1}
\|\sigma_{A, \Phi}\|_{\calM (L^2 \times L^2 \to (L^2, \ell^{1}))}
\le 
c \|A\|_{\calB}
\end{equation}
for all $L^{\infty}$ matrices $A$. 
\\
\rm{(2)}  
If $\Phi \in C_{0}^{\infty}(\R^n \times \R^n)$ satisfies the 
condition (A), then 
exists a constant $c\in (0, \infty)$ depending only on 
$n$ and $\Phi$ such that 
\begin{equation}\label{gecalBinfty}
\|\sigma_{A, \Phi}\|_{\calM (L^2 \times L^2 \to (L^2, \ell^{\infty}))} 
\ge c^{-1}\|A\|_{\calB}
\end{equation}
for all $L^{\infty}$ matrices $A$. 
\end{thm}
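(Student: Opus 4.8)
The plan is to prove the two inequalities separately: \eqref{lecalB1} by a direct estimate (in which the $\calB$-norm enters through $\ell^2$ orthogonality in the ``output frequency'' $\mu+\nu$, while the amalgam structure comes from the Schwartz decay of $\calF^{-1}\Phi$), and \eqref{gecalBinfty} by constructing test functions and dualizing against the amalgam space. For part~(1) I would first reduce to a model operator. Fix $\Psi_1,\Psi_2\in C_0^\infty(\R^n)$ with $\Psi_1(\xi)\Psi_2(\eta)=1$ on $\supp\Phi$; set $f_\mu:=\calF^{-1}[\Psi_1(\cdot-\mu)\widehat f\,]$, $g_\nu:=\calF^{-1}[\Psi_2(\cdot-\nu)\widehat g\,]$, $\phi_\mu(x):=e^{-2\pi i\mu\cdot x}f_\mu(x)$, $\gamma_\nu(x):=e^{-2\pi i\nu\cdot x}g_\nu(x)$, so $\sum_\mu\|\phi_\mu\|_{L^2}^2\lesssim\|f\|_{L^2}^2$ and $\sum_\nu\|\gamma_\nu\|_{L^2}^2\lesssim\|g\|_{L^2}^2$. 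Expanding $\sigma_{A,\Phi}$ in its defining series, inserting the factor $\Psi_1(\xi-\mu)\Psi_2(\eta-\nu)$ in the $(\mu,\nu)$ term, and changing variables gives $T_{\sigma_{A,\Phi}}(f,g)(x)=\sum_{\mu,\nu}a_{\mu,\nu}e^{2\pi ix\cdot(\mu+\nu)}T[\Phi](\phi_\mu,\gamma_\nu)(x)$. Writing $T[\Phi](\phi,\gamma)(x)=\iint K(x-y,x-z)\phi(y)\gamma(z)\,dy\,dz$ with $K:=\calF^{-1}\Phi\in\calS(\R^{2n})$ and interchanging the (absolutely convergent) sum with the integral,
\[
T_{\sigma_{A,\Phi}}(f,g)(x)=\iint K(x-y,x-z)\,\Psi(x,y,z)\,dy\,dz,\qquad \Psi(x,y,z):=\sum_{\lambda\in\Z^n}e^{2\pi ix\cdot\lambda}\!\!\sum_{\mu+\nu=\lambda}\!\!a_{\mu,\nu}\phi_\mu(y)\gamma_\nu(z).
\]

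The key point is that for fixed $(y,z)$ the function $x\mapsto\Psi(x,y,z)$ is $\Z^n$-periodic, so Parseval for Fourier series gives $\|\Psi(\cdot,y,z)\|_{L^2(Q_k)}^2=\sum_\lambda|\sum_{\mu+\nu=\lambda}a_{\mu,\nu}\phi_\mu(y)\gamma_\nu(z)|^2$ \emph{independently of} $k\in\Z^n$, where $Q_k:=k+[0,1)^n$; and this is $\le\|A\|_{\calB}^2\,\Phi^*(y)^2\,\Gamma^*(z)^2$ by Definition~\ref{calB} used in its equivalent dual form $\|(\sum_{\mu+\nu=\lambda}a_{\mu,\nu}p_\mu q_\nu)_\lambda\|_{\ell^2}\le\|A\|_{\calB}\|p\|_{\ell^2}\|q\|_{\ell^2}$, with $\Phi^*:=(\sum_\mu|\phi_\mu|^2)^{1/2}$, $\Gamma^*:=(\sum_\nu|\gamma_\nu|^2)^{1/2}$, $\|\Phi^*\|_{L^2}^2=\sum_\mu\|\phi_\mu\|_{L^2}^2\lesssim\|f\|_{L^2}^2$ and similarly for $\Gamma^*$. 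Minkowski's integral inequality then yields $\|T_{\sigma_{A,\Phi}}(f,g)\|_{L^2(Q_k)}\le\|A\|_{\calB}\iint\big(\sup_{x\in Q_k}|K(x-y,x-z)|\big)\Phi^*(y)\Gamma^*(z)\,dy\,dz$; splitting $y,z$ over unit cubes, using $|K(u,v)|\le C_L(1+|u|+|v|)^{-L}$, summing over $k$ (which turns $\sum_k(1+|k-l|+|k-m|)^{-L}$ into $\lesssim_L(1+|l-m|)^{-L+n}$), and applying Schur's test, one gets $\|T_{\sigma_{A,\Phi}}(f,g)\|_{(L^2,\ell^1)}\lesssim\|A\|_{\calB}\|\Phi^*\|_{L^2}\|\Gamma^*\|_{L^2}\lesssim\|A\|_{\calB}\|f\|_{L^2}\|g\|_{L^2}$, the implied constant depending only on $n$ and $\Phi$. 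Condition~(A) is not used in this part.

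For part~(2), carrying out the supremum over $H$ first in Definition~\ref{calB} gives $\|A\|_{\calB}=\sup\{(\sum_\lambda|\sum_{\mu+\nu=\lambda}a_{\mu,\nu}F(\mu)G(\nu)|^2)^{1/2}:F,G\in\ell^2_0(\Z^n),\ \|F\|_{\ell^2}=\|G\|_{\ell^2}=1\}$. So, given $\varepsilon>0$, I would fix unit $F,G\in\ell^2_0$ realizing $(1-\varepsilon)\|A\|_{\calB}$ here, and construct $f,g\in\calS(\R^n)$ with $\|f\|_{L^2},\|g\|_{L^2}\le c$ and $h$ with $\|h\|_{(L^2,\ell^1)}\le c$ (all $c=c(n,\Phi)$) such that $|\langle T_{\sigma_{A,\Phi}}(f,g),h\rangle|\gtrsim\|A\|_{\calB}$; by the amalgam Hölder inequality $|\langle w,h\rangle|\le\|w\|_{(L^2,\ell^\infty)}\|h\|_{(L^2,\ell^1)}$ this forces \eqref{gecalBinfty}. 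The natural choice is $\widehat f:=\sum_\mu F(\mu)\chi(\cdot-\mu)$, $\widehat g:=\sum_\nu G(\nu)\chi(\cdot-\nu)$ for a fixed $\chi\in C_0^\infty(\R^n)$ supported in $[-1/2,1/2)^n$ (disjoint translates, so $\|f\|_{L^2}=\|g\|_{L^2}=\|\chi\|_{L^2}$), and $h=\calF^{-1}[\widehat h\,]$ with $\widehat h(-\xi-\eta)=\sum_\lambda H(\lambda)\omega(\xi+\eta-\lambda)$ for a fixed bump $\omega$ and an optimally chosen unit sequence $H$; then $h(x)=\widehat\omega(x)q(x)$, $q$ the $\Z^n$-periodic trigonometric polynomial of coefficients $H$, whence $\|h\|_{(L^2,\ell^1)}\le\sum_k(\sup_{Q_k}|\widehat\omega|)\|q\|_{L^2(Q_0)}\le c$ since $\widehat\omega\in\calS$.

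The identity to exploit is $\langle T_{\sigma_{A,\Phi}}(f,g),h\rangle=\sum_{\mu',\nu'}a_{\mu',\nu'}\iint\Phi(\xi-\mu',\eta-\nu')\widehat f(\xi)\widehat g(\eta)\widehat h(-\xi-\eta)\,d\xi\,d\eta$, and here condition~(A) enters through the biorthogonality $\iint\Theta(\xi-\mu,\eta-\nu)\Phi(\xi-\mu',\eta-\nu')\,d\xi\,d\eta=\delta_{(\mu,\nu),(\mu',\nu')}$: if $\widehat f(\xi)\widehat g(\eta)\widehat h(-\xi-\eta)$ reproduced $\Xi(\xi,\eta):=\sum_{\mu,\nu}F(\mu)G(\nu)H(\mu+\nu)\Theta(\xi-\mu,\eta-\nu)$ after integration against every integer translate of $\Phi$, then the double sum would collapse to $\sum_{\mu,\nu}a_{\mu,\nu}F(\mu)G(\nu)H(\mu+\nu)$, which for the optimal $H$ equals $(\sum_\lambda|\sum_{\mu+\nu=\lambda}a_{\mu,\nu}F(\mu)G(\nu)|^2)^{1/2}\ge(1-\varepsilon)\|A\|_{\calB}$, as required. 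The main obstacle — and the heart of the matter — is precisely this matching: $\widehat f(\xi)\widehat g(\eta)\widehat h(-\xi-\eta)$ is a product of functions of $\xi$, of $\eta$, and of $\xi+\eta$ separately, whereas the $\Theta$ furnished by condition~(A) need be neither a tensor product nor supported in a small cube, so off-diagonal pairs $(\mu,\nu)\neq(\mu',\nu')$ genuinely contribute; one must show, using condition~(A) together with the freedom in the choices of $\chi$, $\omega$ and the remaining auxiliary bumps, that the total off-diagonal contribution is at most a small, absorbable multiple of $\|A\|_{\calB}$. I expect this step to require the most care, and it is where the full strength of condition~(A) is used.
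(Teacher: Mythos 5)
Your part (1) is correct, but it takes a genuinely different route from the paper. You represent $T_{\sigma_{A,\Phi}}$ through the kernel $K=\calF^{-1}\Phi\in\calS(\R^{2n})$ acting on the function $\Psi(x,y,z)$, which is $\Z^n$-periodic in $x$, use Parseval on each unit cube to bring in the dual form of Definition~\ref{calB}, and then sum over cubes using the decay of $K$ and Schur's test. The paper instead argues by duality against $h\in(L^2,\ell^\infty)$: it first treats tensor products $\Phi(\xi,\eta)=u(\xi)v(\eta)$, expanding $e^{2\pi i x\cdot(\xi+\eta)}$ in a Taylor series to decouple the variables, applying $\|A\|_{\calB}$ and Parseval to the resulting lattice sums, and then passes to general $\Phi$ by Fourier series expansion on a large cube. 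The two arguments are comparable in strength; yours avoids the Taylor-series decoupling and the tensor-product reduction at the price of the kernel and periodization bookkeeping. Two routine points should be made explicit: the dual form of $\|A\|_{\calB}$ you invoke requires a density argument to pass from $\ell^2_0$ to the sequences $(\phi_\mu(y))_{\mu}$, and the interchange of $\sum_{\mu,\nu}$ with the integrals is justified by the rapid decay of $\|\phi_\mu\|_{L^2}$ and $\|\phi_\mu\|_{L^\infty}$ for $f\in\calS$.

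Part (2), however, has a genuine gap, which you yourself flag: the ``matching'' of the off-diagonal terms is not a technicality to be absorbed but is the entire content of the theorem for general $\Phi$, and the route you propose does not close. Your pairing reduces to
\[
\sum_{\mu',\nu'}a_{\mu',\nu'}\sum_{\mu,\nu,\lambda}F(\mu)G(\nu)H(\lambda)\,
W(\mu-\mu',\nu-\nu',\lambda-\mu'-\nu'),
\quad
W(\alpha,\beta,\gamma)=\iint\Phi(\xi,\eta)\chi(\xi-\alpha)\chi(\eta-\beta)\omega(\xi+\eta-\gamma)\,d\xi d\eta ,
\]
so collapsing to the diagonal would require a biorthogonal system of the very special form $\chi(\xi)\chi(\eta)\omega(\xi+\eta)$ indexed by a three-parameter lattice. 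Condition (A) provides only some $\Theta\in C_0^\infty(\R^{2n})$ biorthogonal to the two-parameter family of integer translates of $\Phi$, with no tensor or $\xi+\eta$ structure, so it cannot be applied to your test functions; and since the off-diagonal terms carry no smallness parameter, there is nothing against which to absorb them.

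The paper circumvents this entirely by a transference argument in the spirit of Grafakos--Kalton. The explicit test-function computation of Subsection~\ref{subsectionproduct} is carried out only for $\Phi(\xi,\eta)=\phi(\xi)\phi(\eta)$ with $\supp\phi\subset Q$ and $\phi=1$ on $2^{-1}Q$, where the diagonal collapse is exact. The general case is then reduced to this one at the level of operator norms through Lemmas~\ref{translation}--\ref{APhitoAtildePhi}: translation invariance of the multiplier norm, unconditionality of the lattice pieces, averaging over translations $(\xi_0,\eta_0)$, and only at the last step the biorthogonality of condition (A), which is used to show that the averaged matrix $\widetilde A$ coincides with $A$. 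Some such operator-norm comparison (or an equivalent randomization and averaging device) appears unavoidable; without it your part (2) is a plan rather than a proof.
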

%%=======================================

Since the norms of the amalgam spaces satisfy 
\[
\|f \|_{(L^2, \ell^{\infty})}\le 
\|f\|_{(L^2, \ell^q)}
\le 
\|f\|_{(L^2, \ell^1)}, 
\quad 
1 \le q \le \infty, 
\]
the following is an immediate corollary to the above theorem. 
%%======================================
\begin{cor}\label{COR}
If $\Phi \in C_{0}^{\infty}(\R^n \times \R^n)$ satisfies the 
condition (A), 
then 
there exists a constant $c\in (0, \infty)$ depending only on 
$n$ and $\Phi$ such that 
\[
c^{-1}\|A\|_{\calB}
\le 
\|\sigma_{A, \Phi}\|_{ \calM (L^2 \times L^2 \to (L^2, \ell^q)) }
\le 
c \|A\|_{\calB}
%\quad 
%1\le q \le \infty, 
\]
for all $L^{\infty}$ matrices $A$ and all $1\le q \le \infty$. 
\end{cor}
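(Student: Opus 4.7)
The plan is to deduce the corollary directly from Theorem \ref{main} by sandwiching the $(L^2,\ell^q)$ norm between $(L^2,\ell^\infty)$ and $(L^2,\ell^1)$. Since the statement asks for bounds that are uniform in $q\in[1,\infty]$ and the two halves of Theorem \ref{main} already handle the extreme cases, no new analysis is needed beyond the elementary monotonicity of the amalgam norms in the outer index.

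First, I would establish the upper bound. Using the inequality $\|f\|_{(L^2,\ell^q)}\le \|f\|_{(L^2,\ell^1)}$ displayed just before the corollary, we have, for every $f,g\in\calS\cap L^2$,
\[
\|T_{\sigma_{A,\Phi}}(f,g)\|_{(L^2,\ell^q)}
\le \|T_{\sigma_{A,\Phi}}(f,g)\|_{(L^2,\ell^1)}.
\]
Taking the supremum over $f,g$ with $\|f\|_{L^2}=\|g\|_{L^2}=1$ and applying part (1) of Theorem \ref{main} yields
\[
\|\sigma_{A,\Phi}\|_{\calM(L^2\times L^2\to (L^2,\ell^q))}
\le \|\sigma_{A,\Phi}\|_{\calM(L^2\times L^2\to (L^2,\ell^1))}
\le c\|A\|_{\calB}.
\]
Note that this half does not require the condition (A).

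Next, for the lower bound, I would use the inequality $\|f\|_{(L^2,\ell^\infty)}\le \|f\|_{(L^2,\ell^q)}$, which gives
\[
\|\sigma_{A,\Phi}\|_{\calM(L^2\times L^2\to (L^2,\ell^\infty))}
\le \|\sigma_{A,\Phi}\|_{\calM(L^2\times L^2\to (L^2,\ell^q))}.
\]
Under the condition (A), part (2) of Theorem \ref{main} bounds the left-hand side below by $c^{-1}\|A\|_{\calB}$, which yields the desired lower estimate. Combining the two inequalities proves the corollary.

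There is no substantive obstacle here; the entire content of the corollary is packaged in Theorem \ref{main}. The only points to verify are that the supremum defining the multiplier norm is monotone in the target norm (immediate from the pointwise inequality between the amalgam norms) and that the constants produced from parts (1) and (2) of Theorem \ref{main} depend only on $n$ and $\Phi$, so one may take a single $c$ that works uniformly for all $q\in[1,\infty]$.
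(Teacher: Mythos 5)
Your proof is correct and follows exactly the same route as the paper: sandwich the $(L^2,\ell^q)$ norm between $(L^2,\ell^\infty)$ and $(L^2,\ell^1)$ and apply the two parts of Theorem \ref{main}. The paper states this as an immediate corollary after recording precisely the monotonicity $\|f\|_{(L^2,\ell^\infty)}\le\|f\|_{(L^2,\ell^q)}\le\|f\|_{(L^2,\ell^1)}$ that you use.
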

%%==========================================

We recall that the $L^2$-based amalgam spaces satisfy 
the following inclusion relations: 
\begin{align*}
&
 (L^2, \ell^2)=L^2, 
\\
&
(L^{2}, \ell^{q}) \hookrightarrow L^{q}   
\;\; \text{if}\;\; 1 < q< 2, 
\\
&
(L^2,\ell^1) \hookrightarrow 
h^1 
 \hookrightarrow 
 L^1, 
\\
&
L^{q} \hookrightarrow (L^{2}, \ell^{q})  
\;\;  \text{if}\;\; 2 < q<  \infty, 
\\
&
L^{\infty} 
 \hookrightarrow 
bmo
 \hookrightarrow 
(L^2,\ell^{\infty}), 
\end{align*}
where $h^1$ is the local Hardy space 
and $bmo$ is the local BMO space given by 
Goldberg \cite{goldberg 1979} 
(a proof of the embedding 
$(L^2,\ell^1) \hookrightarrow 
h^1$ can be found in \cite[\S 2.3]{KMT-JMSJ}).  
Thus Corollary \ref{COR} 
implies nontrivial results for 
mapping properties of 
$T_{\sigma_{A, \Phi}}$ in Lebesgue spaces, $h^1$, or $bmo$.  
For example, it implies that 
if 
$T_{\sigma_{A, \Phi}}: L^2 \times L^2 \to L^2$ 
then 
$T_{\sigma_{A, \Phi}}: L^2 \times L^2 \to h^1\hookrightarrow L^1$.

%%=====================================
\section{Amalgam spaces}
%%=====================================
Here we recall the definition of the amalgam spaces. 
We use the following 
notation to denote 
the cubes in $\R^n$: 
\begin{align*}
&
Q= [-1/2, 1/2]^{n}, 
\\
&
aQ = [-a/2, a/2]^n, 
\quad a \in (0,\infty). 
\end{align*}
If $X,Y,Z$ are function spaces equipped with norms, then 
we write the mixed norm as 
\begin{equation*}
\label{normXYZ}
\| f (x,y,z) \|_{ X_x Y_y Z_z } 
= 
\| \| \| f (x,y,z) 
\|_{ X_x } \|_{ Y_y } \|_{ Z_z }
\end{equation*} 
(pay attention to the order 
of taking norms).     
We shall use these mixed norms for  
$X, Y, Z$ being $L^p$ or $\ell^p$.

The definition of the amalgam space reads as follows.

\begin{defn}
For measurable functions $f$ on 
$\R^n$ and for 
$0<p, q \leq \infty$, 
we define 
\begin{equation*}
\| f \|_{ (L^p,\ell^q)} 
=
 \| f(x+\nu) \|_{L^p_x (Q) \ell^q_{\nu}(\Z^n) }
=\left\{ \sum_{\nu \in \Z^n}
\left( \int_{Q} \big| f(x+\nu) \big|^p \, dx 
\right)^{q/p} \right\}^{1/q} 
\end{equation*}
with the usual modification when $p$ or $q$ is equal to $\infty$. 
The amalgam space 
$ (L^p,\ell^q)$ is 
defined to be the set of all 
those $f$ satisfying $\| f \|_{ (L^p,\ell^q)} <\infty$. 
\end{defn}

It is obvious that 
$(L^p,\ell^p)= L^p$. 
We have 
\[
(L^{p_1}, \ell^{q_1})
\hookrightarrow 
(L^{p_2}, \ell^{q_2})
\;\;  \text{if} 
\; \;  
p_1 \ge p_2 \;\;\text{and}\;\; 
q_1 \le q_2. 
\]
For $1\leq p,q < \infty$, 
the duality 
\[
(L^p,\ell^q)^\ast =  (L^{p^{\prime}},\ell^{q^{\prime}}), 
\quad 
1/p + 1/p^{\prime}=1, 
\quad 
1/q + 1/q^{\prime}=1, 
\]
holds.

In the present paper, 
we use only the spaces 
$(L^p,\ell^q)$ with $p=2$ and $1\le q \le \infty$. 
For details of amalgam spaces, 
see Fournier--Stewart \cite{fournier stewart 1985} 
or Holland \cite{holland 1975}.

%%%%%%%%%%%%%%%%%%%%%%%%%%%%%%%%%%%%%%%%%%%%%%%%
\section{Proof of Theorem \ref{main} (1)}
\label{sectionlecalB}
%%%%%%%%%%%%%%%%%%%%%%%%%%%%%%%%%%%%%%%%%%%%%%%%
In this section, 
we prove 
the inequality 
\eqref{lecalB1}. 
%
%$\|T_{\sigma_{A,\Phi}}\|_{L^2 \times L^2 \to (L^2, \ell^{1})}
%\le c \|A\|_{\calB}$. 
We shall follow the argument given in 
Kato-Miyachi-Tomita \cite{KMT-arXiv}. 
Some ideas go back to 
%Coifman-Meyer \cite{CM} and 
Boulkhemair \cite{boulkhemair 1995}.

The proof will be divided into two steps.

In the first step, we assume that $\Phi$ is written as 
$\Phi (\xi, \eta)=u(\xi) v (\eta)$ 
with 
$u, v \in C_{0}^{\infty}(\R^n)$. 
For this $\Phi$, we shall prove that 
if $T$ is a positive number 
satisfying $\supp u \subset TQ$ and $\supp v \subset TQ$ 
then 
\eqref{lecalB1} holds with 
$c=c_{n, T} 
\|u\|_{L^{\infty}} \|v\|_{L^{\infty}}$. 
We assume 
$A$ is an arbitrary $L^{\infty}$ matrix on $\Z^n \times \Z^n$. 
By duality, 
it is sufficient to show 
the inequality 
\begin{equation}\label{bbb}
\bigg|
\int_{\R^n} 
T_{\sigma_{A,\Phi}} (f, g)(x) h(x)\, dx
\bigg|
\le c_{n, T} \|u\|_{L^{\infty}} \|v\|_{L^{\infty}} 
\|A\|_{\calB}
\|{f}\|_{L^2 }
\|{g}\|_{L^2}
\|h\|_{(L^2, \ell^{\infty})}
\end{equation}
for all $f, g \in \calS $ and 
all 
$h \in (L^2, \ell^{\infty})$ with compact support.

The integral in 
the left hand side of 
\eqref{bbb} can be written as 
\begin{align*}
&
\int_{\R^n} 
T_{\sigma_{A,\Phi}}(f,g)(x) h(x)\, dx
\\
&
=
\sum_{\mu, \nu \in \Z^n} 
\iiint_{\xi, \eta, x  \in \R^n}
e^{2\pi i x \cdot (\xi + \eta)} 
a_{\mu, \nu} u (\xi -\mu) v (\eta - \nu) 
\widehat{f}(\xi)
\widehat{g}(\eta) 
h(x)\, d\xi d\eta dx
\\
&
=
\sum_{\mu, \nu, \rho \in \Z^n} 
\iiint_{\substack{
\xi, \eta \in TQ 
\\ x \in Q
}}
e^{2\pi i (x+\rho) \cdot (\xi + \mu + \eta + \nu)} 
a_{\mu, \nu} 
u (\xi) v (\eta ) 
\widehat{f}(\xi + \mu)
\widehat{g}(\eta + \nu) 
h(x+ \rho)\, d\xi d\eta dx
\\
&=(\ast). 
\end{align*}
We write 
\begin{align*}
&
e^{2\pi i (x+\rho) \cdot (\xi + \mu + \eta + \nu)} 
=
e^{2\pi i x \cdot (\xi + \eta)}
e^{2\pi i x \cdot (\mu + \nu)}
e^{2\pi i \rho \cdot \xi }
e^{2\pi i \rho \cdot \eta}  
\\
&=
e^{2\pi i x \cdot (\mu + \nu)}
e^{2\pi i \rho \cdot \xi }
e^{2\pi i \rho \cdot \eta}
\sum_{\alpha\in (\Nzero)^n} 
\frac{1}{\alpha !} (2\pi i)^{|\alpha|} 
x^{\alpha}\xi^{\alpha}
\sum_{\beta\in (\Nzero)^n} 
\frac{1}{\beta !} (2\pi i)^{|\beta|} 
x^{\beta}\eta^{\beta}
\end{align*}
(notice that 
$e^{2\pi i \rho \cdot (\mu + \nu)}=1$ 
since 
$\rho \cdot (\mu + \nu)$ is an integer). 
Then we have 
\begin{align*}
(\ast)
&=
\sum_{\mu, \nu , \rho \in \Z^n} 
\sum_{\alpha, \beta\in (\Nzero)^n} 
\frac{(2\pi i)^{|\alpha|} }{\alpha !} \frac{(2\pi i)^{|\beta|} } {\beta !}  
\\
&\qquad 
\times a_{\mu, \nu}
\bigg( \int_{TQ}
e^{2\pi i \rho \cdot \xi} 
u (\xi) \xi^{\alpha} 
\widehat{f}(\xi+ \mu)\, d\xi\bigg)
\bigg( \int_{TQ} 
e^{2\pi i \rho \cdot \eta} 
v (\eta) \eta^{\beta} 
\widehat{g}(\eta+\nu)\, d\eta\bigg)
\\
&\qquad 
\times 
\bigg(\int_{Q}e^{2\pi i  x \cdot (\mu + \nu)}
x^{\alpha + \beta}h(x+ \rho)\, dx
\bigg).  
\end{align*}
In order to estimate this, 
we take 
the sums $\sum_{\mu, \nu}$, 
$\sum_{\rho}$, and 
$\sum_{\alpha, \beta}$ in this order. 
First, we estimate the 
sum $\sum_{\mu, \nu}$ by using 
$\|A\|_{\calB}$ and 
Parseval's identity to obtain 
\begin{align*}
&\bigg|
\sum_{\mu, \nu \in \Z^n} 
a_{\mu, \nu} 
\bigg( \int_{TQ}
e^{2\pi i \rho \cdot \xi} 
u (\xi) \xi^{\alpha} \widehat{f}(\xi + \mu)\, d\xi\bigg)
\bigg( \int_{TQ} 
e^{2\pi i \rho \cdot \eta} 
v (\eta) \eta^{\beta} \widehat{g}(\eta + \nu)\, d\eta\bigg)
\\
&\qquad
\times \bigg(\int_{Q}e^{2\pi i  x \cdot (\mu + \nu)}
x^{\alpha + \beta} h(x+ \rho)\, dx
\bigg) 
\bigg|
\\
&
\le 
\|A\|_{\calB}
\bigg\| \int_{TQ}e^{2\pi i \rho \cdot \xi} 
u (\xi) \xi^{\alpha} 
\widehat{f}(\xi + \mu)\, d\xi\bigg\|_{\ell^2_{\mu}}
\bigg\| \int_{TQ}
e^{2\pi i \rho \cdot \eta} 
 v (\eta) \eta^{\beta} 
\widehat{g}(\eta + \nu)\, d\eta\bigg\|_{\ell^2_{\nu}}
\\
&\qquad 
\times \bigg\|\int_{Q}e^{2\pi i  x \cdot\tau  }
x^{\alpha + \beta} h(x+ \rho)\, dx
\bigg\|_{\ell^2_{\tau}}
\\
&
=
\|A\|_{\calB}
\bigg\| \int_{TQ}e^{2\pi i \rho \cdot \xi} 
u (\xi) \xi^{\alpha}
 \widehat{f}(\xi + \mu)\, d\xi\bigg\|_{\ell^2_{\mu}}
\bigg\| \int_{TQ}
e^{2\pi i \rho \cdot \eta} 
v (\eta) \eta^{\beta} 
\widehat{g}(\eta + \nu)\, d\eta\bigg\|_{\ell^2_{\nu}}
\\
&\qquad 
\times
\|x^{\alpha + \beta} h(x+ \rho)\|_{L^2_{x} (Q)}
\\
&
=(\ast\ast\ast). 
\end{align*}
Next, 
we estimate the sum $\sum_{\rho}$ by using 
the Cauchy-Schwarz inequality,  
Parseval's identity, and Plancherel's theorem  
to obtain 
\begin{align*}
&\sum_{\rho \in \Z^n}
(\ast\ast\ast)
\\
&
\le 
\|A\|_{\calB}
\bigg\| \int_{TQ}e^{2\pi i \rho \cdot \xi} 
u (\xi) \xi^{\alpha}
 \widehat{f}(\xi + \mu)\, d\xi
 \bigg\|_{\ell^2_{\mu}\ell^2_{\rho}}
\bigg\| \int_{TQ}e^{2\pi i \rho \cdot \eta} 
v (\eta) \eta^{\beta} 
\widehat{g}(\eta + \nu)\, d\xi
\bigg\|_{\ell^2_{\nu}\ell^2_{\rho}}
\\
&\qquad 
\times \sup_{\rho}
\|x^{\alpha + \beta} h(x+ \rho)\|_{L^2_{x} (Q)}
\\
&
\le c_{n,T}
\|A\|_{\calB}
\|
u (\xi) \xi^{\alpha} \widehat{f}(\xi + \mu) 
\|_{L^2_{\xi}(TQ)\ell^2_{\mu}}
\| 
v(\eta) \eta^{\beta} 
\widehat{g}(\eta + \nu)
\|_{L^2_{\eta}(TQ) \ell^2_{\nu}}
\\
&\qquad 
\times \sup_{\rho}
\|x^{\alpha + \beta} h(x+ \rho)\|_{L^2_{x} (Q)}
\\
&
\le c_{n,T} T^{|\alpha|+ |\beta|}
\|A\|_{\calB}
\|u\|_{L^{\infty}} 
\|v\|_{L^{\infty}} 
 \|\widehat{f}(\xi + \mu) 
\|_{L^2_{\xi}(TQ)\ell^2_{\mu}}
\| 
\widehat{g}(\eta + \nu)
\|_{L^2_{\eta}(TQ) \ell^2_{\nu}}
\sup_{\rho}
\|h(x+ \rho)\|_{L^2_{x} (Q)}
\\
&
\le c_{n,T} T^{|\alpha|+ |\beta|}
\|A\|_{\calB}
\|u\|_{L^{\infty}} 
\|v\|_{L^{\infty}} 
\|\widehat{f}\|_{L^2 (\R^n)}
\|\widehat{g}\|_{L^2 (\R^n)}
\sup_{\rho}
\|h(x+ \rho)\|_{L^2_{x} (Q)}
\\
& 
= 
c_{n, T} T^{|\alpha|+ |\beta|}
\|A\|_{\calB}
\|u\|_{L^{\infty}} 
\|v\|_{L^{\infty}} 
\|{f}\|_{L^2(\R^n) }
\|{g}\|_{L^2(\R^n)}
\|h\|_{(L^2, \ell^{\infty})(\R^n)} 
\end{align*}
(the constant $c_{n,T}$ in different places are not the same). 
Finally, we estimate 
the sum 
$\sum_{\alpha, \beta}$ by using 
\[
\sum_{\alpha, \beta\in (\Nzero)^n} 
\frac{(2\pi T)^{|\alpha|} }{\alpha !} \frac{(2\pi T)^{|\beta|} } {\beta !} 
=c_{n,T} < \infty
\]
and obtain \eqref{bbb}.

In the second step, we prove that the inequality \eqref{lecalB1} 
holds for every 
$\Phi \in C_{0}^{\infty}(\R^n \times \R^n)$. 
We shall use the idea of using Fourier expansion 
(this idea may trace back to \cite{CM}). 
Take a positive number $T$ satisfying 
$\supp \Phi \subset 2^{-1}TQ\times 2^{-1}TQ$ and take 
a function 
$\phi\in C_{0}^{\infty}(\R^n)$ such that 
$\supp \phi \subset TQ$ and 
$\phi (\xi)=1$ on $2^{-1}TQ$. 
Since $\Phi $ is a smooth function supported in 
the interior of 
$TQ \times TQ$, 
the Fourier series expansion 
gives 
\[
\Phi (\xi, \eta)
=
\sum_{k, \ell \in \Z^n} b_{k,\ell} 
e^{2\pi i k \cdot \xi/T}
e^{2\pi i \ell \cdot \eta/T}, 
\quad (\xi, \eta)\in TQ \times TQ,  
\]
where $b_{k,\ell}$ is a rapidly decreasing sequence. 
From the choice of the function $\phi$, we have 
\begin{equation}\label{FourierseriesExp}
\Phi (\xi, \eta)
=
\sum_{k, \ell \in \Z^n} b_{k,\ell} 
e^{2\pi i k \cdot \xi/T}
e^{2\pi i \ell \cdot \eta/T}
\phi (\xi)\phi (\eta), 
\quad (\xi, \eta)\in \R^n \times \R^n. 
\end{equation}
Now the estimate proved in the first step yields 
\begin{equation*}
\bigg\|
\sum_{\mu, \nu \in \Z^n} 
a_{\mu, \nu} 
e^{2\pi i k \cdot (\xi- \mu)/T}
\phi (\xi - \mu)
e^{2\pi i \ell \cdot (\eta- \nu)/T}
\phi (\eta - \mu)
\bigg\|_{\calM (L^2 \times L^2 \to X)} 
\le c_{n,T} 
\|A\|_{\calB}
\end{equation*}
for each $(k, \ell)$ with the constant 
$c_{n,T}$ independent of $k, \ell$. 
Thus since 
$b_{k,\ell}$ is rapidly decreasing 
we obtain 
\eqref{lecalB1}. 
This proves the part (1) of Theorem \ref{main}. 
%%%%%%%%%%%%%%%%%%%%%%%%%%%%%%%%%%%%%%%%%%%%%%%
\section{
Proof of Theorem \ref{main} (2)} 
\label{sectiongecalB}
%%%%%%%%%%%%%%%%%%%%%%%%%%%%%%%%%%%%%%%%%%%%%%%%

In this section, 
we prove 
the inequality 
\eqref{gecalBinfty}. 
We shall first consider $\Phi$ of a special form and then 
consider general $\Phi$.

%%%%%%%%%%%%%%%%%%%%%%%%%%%%%%%%%%%%%%%%%%%%%%%%
\subsection{The case of a special $\Phi (\xi, \eta)$}
\label{subsectionproduct}
%%%%%%%%%%%%%%%%%%%%%%%%%%%%%%%%%%%%%%%%%%%%%%%%
In this subsection 
we shall prove the 
inequality \eqref{gecalBinfty} for the function 
$\Phi$ defined by 
$\Phi (\xi, \eta)=\phi (\xi) \phi (\eta)$ with 
$\phi$ satisfying  
\[
\phi \in C_{0}^{\infty}(\R^n), 
\quad 
\supp \phi \subset Q, 
\quad 
\phi (\xi) = 1 \;\; \text{for}\; \; 
\xi \in 2^{-1}Q.  
\]
We assume 
$A$ is an arbitrary $L^{\infty}$ matrix, 
and assume 
$F, G, H\in \ell^{2}_{0}(\Z^n)$ satisfy 
$\|F\|_{\ell^2}=
\|G\|_{\ell^2}=
\|H\|_{\ell^2}=1$. 
We shall prove 
that there exist $f, g \in \calS$ such that  
\begin{equation}\label{xxx}
\|f\|_{L^2}\approx \|g\|_{L^2}\approx 1
\end{equation}
and 
\begin{equation}\label{aaa}
\|T_{\sigma_{A,\Phi}}(f, g)\|_{(L^2, \ell^{\infty})}
\gtrsim 
\bigg|
\sum_{\mu, \nu \in \Z^n} 
a_{\mu, \nu}
F(\mu) G(\nu) H(\mu + \nu)
\bigg|, 
\end{equation}
where 
the constant 
in the above $\approx$ and $\gtrsim$ depend 
only on 
$n$ and $\phi$. 
This certainly implies the desired estimate 
\eqref{gecalBinfty}.

We take a function $\theta \in C_{0}^{\infty}(\R^n)$ such that 
$\supp \theta \subset 2^{-1}Q$ and 
$|\calF^{-1}\theta (x)| \gtrsim 1$ for $x \in Q$, 
and define $f, g$ by 
\begin{equation*}
\widehat{f}(\xi) = \sum_{\mu \in \Z^n} F(\mu) 
\theta (\xi - \mu), 
\quad 
\widehat{g}(\eta) = \sum_{\nu \in \Z^n} G(\nu) 
\theta (\eta - \nu). 
\end{equation*}
Obviously 
$f, g\in \calS (\R^n)$. 
We shall prove that 
these 
$f$ and $g$ satisfy 
\eqref{xxx} and 
\eqref{aaa}.

The estimate \eqref{xxx} is obvious since 
\[
\|f\|_{L^2}=\|\widehat{f}\|_{L^2} \approx \|F\|_{\ell^2}=1, 
\quad 
\|g\|_{L^2}=\|\widehat{g}\|_{L^2} \approx \|G\|_{\ell^2}=1. 
\]
To prove \eqref{aaa}, 
notice that our choice of $\phi$ and $\theta$ implies 
\[
\phi (\xi-\mu) \widehat{f}(\xi) = F(\mu)\theta (\xi-\mu), 
\quad 
\phi (\eta-\nu) \widehat{g}(\xi) = G(\nu)\theta (\eta-\nu) 
\]
for all $\mu, \nu \in \Z^n$. 
Hence 
\begin{align*}
&
T_{\sigma_{A,\Phi}}(f, g)(x) 
\\
&
=
\sum_{\mu, \nu \in \Z^n} 
a_{\mu, \nu}
\iint 
e^{2\pi i x \cdot (\xi + \eta)} 
\phi (\xi-\mu) 
\phi (\eta-\nu) 
\widehat{f}(\xi) 
\widehat{g}(\eta) 
\, d \xi d\eta
\\
&
=
\sum_{\mu, \nu \in \Z^n} 
a_{\mu, \nu}
F(\mu) G(\nu) 
\iint e^{2\pi i x \cdot (\xi + \eta )} 
\theta (\xi-\mu ) 
\theta (\eta - \nu) 
\, d \xi d\eta
\\
&
=
\sum_{\mu, \nu \in \Z^n} 
a_{\mu, \nu}
F(\mu) G(\nu) 
\iint e^{2\pi i x \cdot (\xi + \mu + \eta + \nu )} 
\theta (\xi ) 
\theta (\eta ) 
\, d \xi d\eta
\\
&
=
\sum_{\mu, \nu \in \Z^n} 
a_{\mu, \nu}
F (\mu) G(\nu) 
e^{2 \pi i x \cdot (\mu + \nu)}
(\calF^{-1}{\theta} (x))^2 . 
\end{align*}
Let 
$h\in L^2 (Q)$ be the function defined by 
\[
(\calF^{-1}{\theta} (x))^2 h(x) = 
\sum_{\rho \in \Z^n} H(\rho) e^{- 2 \pi i \rho \cdot x}, 
\quad 
x \in Q. 
\]
Then,  
since $|\calF^{-1}{\theta} (x)|\gtrsim 1$ on $Q$, 
Parseval's identity implies  
$
\|h\|_{L^2(Q)}\approx \|H\|_{\ell^2}=1$. 
We have 
\begin{align*}
&
\int_{Q}T_{\sigma_{A,\Phi}}(f, g)(x) h(x)\, dx 
\\
&
=
\sum_{\mu, \nu \in \Z^n} 
a_{\mu, \nu}
F (\mu) G(\nu) 
\int_{Q}
e^{2 \pi i x \cdot (\mu + \nu)}
(\calF^{-1}{\theta} (x))^2 
h(x)\, dx
\\
&
=
\sum_{\mu, \nu \in \Z^n} 
a_{\mu, \nu}
F (\mu) G(\nu) H(\mu + \nu). 
\end{align*}
Thus 
\begin{align*}
&
\| T_{\sigma_{A,\Phi}}(f, g) \|_{(L^2, \ell^{\infty})}
\ge 
\| T_{\sigma_{A,\Phi}}(f, g) \|_{L^2(Q)}
\gtrsim 
\bigg|
\int_{Q}
T_{\sigma_{A,\Phi}}(f, g)(x) h(x)\, dx
\bigg|
\\
&
=
\bigg|
\sum_{\mu, \nu \in \Z^n} 
a_{\mu, \nu}
F (\mu) G(\nu) H(\mu + \nu)
\bigg|, 
\end{align*}
as desired. 
Thus the estimate 
\eqref{gecalBinfty} is proved for the special $\Phi$.

%%%%%%%%%%%%%%%%%%%%%%%%%%%%%%%%%%%%%%%%%%%%%%%%
\subsection{The case of general $\Phi (\xi, \eta)$}
\label{general}
%%%%%%%%%%%%%%%%%%%%%%%%%%%%%%%%%%%%%%%%%%%%%%%%
The argument in this subsection follows the ideas given by 
Grafakos and Kalton \cite[Proposition 6.2 and Lemma 6.3]{GK-Marcinkiewicz}. 

In order to simplify notation, we write 
$X=(L^2, \ell^{\infty})$. 
We also use the following notation:  
for $m \in L^{\infty}(\R^n)$, 
we write 
$\|m \|_{\calM (L^2 \to L^2)}$ to denote 
the $L^2 \to L^2$ operator norm of the 
linear Fourier multiplier operator 
$f \mapsto \calF^{-1}(m\widehat{f})$. 
By Plancherel's theorem, 
we have in fact 
$\|m \|_{\calM (L^2 \to L^2)}= \|m\|_{L^{\infty}}$.

We shall give the argument in a sequence of lemmas. 

%%==========================================
\begin{lem}\label{translation}
The equality 
\[
\|\sigma (\xi + \xi_0, \eta + \eta_{0})\|_{\calM (L^2 \times L^2 \to X)}
=
\|\sigma (\xi , \eta )\|_{\calM (L^2 \times L^2 \to X)} 
\]
holds for all $\sigma \in L^{\infty}(\R^n \times \R^n)$ and 
all $\xi_{0}, \eta_{0}\in \R^n$. 
\end{lem}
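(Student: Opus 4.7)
The plan is to show that translating $\sigma$ in its Fourier variables corresponds, on the operator side, to modulating the inputs $f,g$ and multiplying the output by a unimodular factor; since all three of these operations preserve the relevant norms, the multiplier norm is unchanged.

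First I would fix $\xi_0, \eta_0 \in \R^n$ and write $\tilde\sigma(\xi,\eta) = \sigma(\xi+\xi_0, \eta+\eta_0)$. Starting from the definition of $T_{\tilde\sigma}(f,g)(x)$ and performing the change of variables $\xi \mapsto \xi - \xi_0$, $\eta \mapsto \eta - \eta_0$ in the defining integral, I obtain
\[
T_{\tilde\sigma}(f,g)(x)
= e^{-2\pi i x\cdot (\xi_0+\eta_0)}
\iint_{\R^n\times\R^n} e^{2\pi i x\cdot(\xi+\eta)} \sigma(\xi,\eta)\, \widehat{f}(\xi-\xi_0)\, \widehat{g}(\eta-\eta_0)\, d\xi d\eta.
\]
Recognizing $\widehat{f}(\xi-\xi_0)$ as the Fourier transform of $f_{\xi_0}(x) := e^{2\pi i x\cdot \xi_0}f(x)$, and similarly for $g$, this becomes
\[
T_{\tilde\sigma}(f,g)(x) = e^{-2\pi i x\cdot(\xi_0+\eta_0)}\, T_{\sigma}(f_{\xi_0}, g_{\eta_0})(x).
\]

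Next I would use two elementary isometry facts. Modulation is an isometry of $L^2$, so $\|f_{\xi_0}\|_{L^2}=\|f\|_{L^2}$ and $\|g_{\eta_0}\|_{L^2}=\|g\|_{L^2}$. Moreover, the norm on $X=(L^2,\ell^\infty)$ depends only on the pointwise absolute value of a function, since
\[
\|F\|_{(L^2,\ell^\infty)} = \sup_{\nu\in \Z^n}\Bigl(\int_Q |F(x+\nu)|^2\,dx\Bigr)^{1/2},
\]
and $|e^{-2\pi i x\cdot(\xi_0+\eta_0)}|=1$. Hence $\|T_{\tilde\sigma}(f,g)\|_X = \|T_\sigma(f_{\xi_0},g_{\eta_0})\|_X$, and taking the supremum over $f,g$ of unit $L^2$ norm (equivalently over $f_{\xi_0}, g_{\eta_0}$ of unit norm) on both sides gives equality of the multiplier norms.

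No step presents a real obstacle here; the only care needed is in the change of variables and in noting that the amalgam norm is unaffected by multiplication by a unimodular function. The same argument would in fact show the identity for any target space whose norm is invariant under pointwise multiplication by unimodular functions.
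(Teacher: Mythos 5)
Your proof is correct and is essentially the same as the paper's: you derive the identity $T[\sigma(\xi+\xi_0,\eta+\eta_0)](f,g) = e^{-2\pi i x\cdot(\xi_0+\eta_0)}\,T_\sigma(e^{2\pi i x\cdot\xi_0}f, e^{2\pi i x\cdot\eta_0}g)$ and then use that modulation preserves $L^2$-norms and that the $(L^2,\ell^\infty)$-norm is unaffected by multiplication by a unimodular function. The only difference is that you spell out the change of variables that the paper leaves implicit.
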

%%==========================================

\begin{proof}
If we define 
$E_{a}(x) = e^{2\pi i a \cdot x }$ for 
$a \in \R^n$ and $x \in \R^n$, 
then we have 
the formula 
\[
T[\sigma (\xi + \xi_0, \eta + \eta_{0})] (f, g)(x)
=
E_{-\xi_0 - \eta_0} (x) 
T[\sigma] 
(E_{\xi_0} f, E_{\eta_0}g ) (x). 
\]
This implies the equality of the lemma since 
multiplication by the 
unimodular function $E_{a}$ does not change 
the norms of 
$L^2$ and 
$X=(L^2, \ell^{\infty})$. 
\end{proof}

%%==========================================
\begin{lem}\label{unconditionaltoPhi}
Let 
$\sigma_{\mu, \nu} \in L^{\infty}(\R^n \times \R^n)$ be given for 
each $(\mu, \nu)\in \Z^n \times \Z^n$ and assume 
the following: 
{(i)} $\sup_{\mu, \nu} \|\sigma_{\mu, \nu}\|_{L^{\infty}}<\infty$;  
{(ii)} there exists a $K\in (0, \infty)$ such that 
$\supp \sigma_{\mu, \nu} \subset (\mu, \nu) + KQ\times KQ$ 
for all $\mu, \nu$; 
{(iii)} there exists an $M\in [0, \infty)$ such that 
\begin{equation}\label{unconditionalaaa}
\bigg\|
\sum_{\mu, \nu \in \Z^n} 
\alpha_{\mu}
\alpha^{\prime}_{\nu}
\sigma_{\mu, \nu}
\bigg\|_{\calM (L^2 \times L^2 \to X)} 
\le M 
\end{equation}
for all 
$\alpha_{\mu} \in \{0, 1\}$ and 
$\alpha^{\prime}_{\nu} \in \{0, 1\}$. 
Then for each 
$\Phi \in C_{0}^{\infty}(\R^n\times \R^n)$, 
there exists a constant 
$c$ depending 
only on 
$n, K, \Phi$ such that 
\begin{equation*}%\label{unconditionalPhi}
\bigg\|
\sum_{\mu, \nu \in \Z^n} 
%\alpha_{\mu}
%\alpha^{\prime}_{\nu}
\Phi (\xi- \mu, \eta- \nu) 
\sigma_{\mu, \nu}
\bigg\|_{\calM (L^2 \times L^2 \to X)} 
\le c M.  
\end{equation*}
%for all 
%$\alpha_{\mu} \in \{0, 1\}$ and 
%$\alpha^{\prime}_{\nu} \in \{0, 1\}$. 
\end{lem}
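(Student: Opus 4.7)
The plan is to expand $\Phi$ in a Fourier series on a fundamental cube containing its support, which converts $\sum_{\mu,\nu}\Phi(\xi-\mu,\eta-\nu)\sigma_{\mu,\nu}$ into a convergent series of modulated copies of $\sum_{\mu,\nu}\alpha_\mu\alpha'_\nu\sigma_{\mu,\nu}$ with unimodular $\alpha_\mu,\alpha'_\nu$. Modulation invariance of the $\calM(L^2\times L^2\to X)$-norm (analogous to Lemma~\ref{translation}), together with a layer-cake decomposition of unimodular coefficients into $\{0,1\}$-valued ones, will reduce everything to hypothesis (iii).

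First I would choose $T>0$ large enough that $\supp\Phi\subset\operatorname{int}(TQ\times TQ)$ and $KQ\subset TQ$; this $T$ depends only on $K$ and $\Phi$. Since $\Phi\in C_0^\infty$, its $T$-periodic Fourier expansion
\begin{equation*}
\Phi(\xi,\eta)=\sum_{k,\ell\in\Z^n}b_{k,\ell}\,e^{2\pi i(k\cdot\xi+\ell\cdot\eta)/T}
\end{equation*}
converges on $TQ\times TQ$ with $(b_{k,\ell})$ rapidly decreasing, so $\sum|b_{k,\ell}|<\infty$. Because $(\xi-\mu,\eta-\nu)\in KQ\times KQ\subset TQ\times TQ$ on $\supp\sigma_{\mu,\nu}$, the substitution is valid after multiplying by $\sigma_{\mu,\nu}(\xi,\eta)$, and rearranging gives
\begin{equation*}
\sum_{\mu,\nu}\Phi(\xi-\mu,\eta-\nu)\sigma_{\mu,\nu}(\xi,\eta)=\sum_{k,\ell}b_{k,\ell}\,e^{2\pi i(k\cdot\xi+\ell\cdot\eta)/T}\,\widetilde{\sigma}_{k,\ell}(\xi,\eta),
\end{equation*}
where $\widetilde{\sigma}_{k,\ell}(\xi,\eta)=\sum_{\mu,\nu}\alpha_\mu\alpha'_\nu\sigma_{\mu,\nu}(\xi,\eta)$ with $\alpha_\mu=e^{-2\pi ik\cdot\mu/T}$ and $\alpha'_\nu=e^{-2\pi i\ell\cdot\nu/T}$ (depending on $k,\ell$) unimodular.

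Next I would verify that pure-exponential modulation does not change the $\calM(L^2\times L^2\to X)$-norm: the identity $T[e^{2\pi i(a\cdot\xi+b\cdot\eta)}\sigma](f,g)(x)=T[\sigma](\tau_{-a}f,\tau_{-b}g)(x)$, where $\tau_c f(x)=f(x-c)$, together with the $L^2$-isometry of $\tau$, gives equality of the two multiplier norms. Hence it suffices to bound $\|\widetilde{\sigma}_{k,\ell}\|_\calM$ uniformly by a constant multiple of $M$. For this, I would write each unimodular $\alpha_\mu$ as $\sum_{j=1}^{4}\epsilon_j a^{(j)}_\mu$ with $\epsilon_j\in\{1,-1,i,-i\}$ and $a^{(j)}_\mu\in[0,1]$ (real and imaginary positive/negative parts), and analogously for $\alpha'_\nu$, expanding $\widetilde{\sigma}_{k,\ell}$ into $16$ terms of the form $\epsilon\sum_{\mu,\nu}a_\mu a'_\nu\sigma_{\mu,\nu}$ with $a_\mu,a'_\nu\in[0,1]$. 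The layer-cake formula $a=\int_0^1\chi_{\{t<a\}}\,dt$ applied separately in the $\mu$- and $\nu$-variables then rewrites each such term as a double integral over $[0,1]^2$ of $\sum_{\mu,\nu}\chi_\mu(t)\chi'_\nu(s)\sigma_{\mu,\nu}$ with $\chi_\mu(t),\chi'_\nu(s)\in\{0,1\}$. Hypothesis (iii) bounds the $\calM$-norm of the inner sum by $M$, and the Minkowski-type inequality $\|\int\sigma_t\,dt\|_\calM\le\int\|\sigma_t\|_\calM\,dt$ (valid because $T[\int\sigma_t\,dt](f,g)=\int T[\sigma_t](f,g)\,dt$ by Fubini and $X$ is a Banach function space) yields $\|\widetilde{\sigma}_{k,\ell}\|_\calM\le 16M$.

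Combining with the triangle inequality gives
\begin{equation*}
\bigg\|\sum_{\mu,\nu}\Phi(\cdot-\mu,\cdot-\nu)\sigma_{\mu,\nu}\bigg\|_\calM\le\sum_{k,\ell}|b_{k,\ell}|\cdot\|\widetilde{\sigma}_{k,\ell}\|_\calM\le 16M\sum_{k,\ell}|b_{k,\ell}|=cM,
\end{equation*}
with $c$ depending only on $n,K,\Phi$, as desired. The main technical point is the reduction from the unimodular coefficients produced by the Fourier expansion to the $\{0,1\}$-valued coefficients to which hypothesis (iii) applies, i.e., making sure the layer-cake decomposition and Minkowski's inequality for the $\calM$-norm under continuous integration both go through cleanly; the Fourier expansion and modulation-invariance steps are then routine.
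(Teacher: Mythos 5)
Your proof is correct, but it takes a genuinely different reductive route from the paper's. The paper first treats product bumps $\Phi(\xi,\eta)=u(\xi)v(\eta)$ by splitting $\Z^n\times\Z^n$ into residue classes modulo an integer $N>(T+K)/2$: thanks to the support hypothesis (ii), for each residue class $(j,k)$ the sum $\sum_{\mu\equiv j,\,\nu\equiv k}u(\xi-\mu)v(\eta-\nu)\sigma_{\mu,\nu}$ factors as $\bigl(\sum_{\mu\equiv j,\nu\equiv k}\sigma_{\mu,\nu}\bigr)\cdot\bigl(\sum_{\mu'\equiv j}u(\cdot-\mu')\bigr)\cdot\bigl(\sum_{\nu'\equiv k}v(\cdot-\nu')\bigr)$, the first factor is handled by (iii) with the residue-class indicators as $\{0,1\}$ coefficients, and the other two are $L^\infty$ one-variable multipliers; the finitely many residue classes are then summed, and general $\Phi$ is reduced to products by the Fourier expansion \eqref{FourierseriesExp}, which folds the modulations $e^{2\pi ik\cdot\xi/T}$ into the bumps $u,v$ so that unimodular coefficients on the $(\mu,\nu)$-lattice never appear. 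You instead expand $\Phi$ directly, which makes the lattice coefficients $e^{-2\pi ik\cdot\mu/T}$ unimodular rather than $\{0,1\}$-valued, and then you recover (iii) by the real/imaginary positive/negative splitting ($16$ terms) followed by a layer-cake integral and the Minkowski inequality for the $\calM$-norm (the same device the paper deploys later, in Lemma~\ref{unconditionaltoaverage}, but not here), plus a modulation-invariance observation analogous to Lemma~\ref{translation}. Both routes work: yours avoids the residue-class disjointness argument at the cost of the $16M$ constant, the layer-cake step, and the explicit modulation-invariance lemma, whereas the paper's factorization bypasses non-$\{0,1\}$ coefficients altogether. You should, however, justify the pointwise rearrangement of the double series $\sum_{\mu,\nu}\sum_{k,\ell}$ (locally finite supports plus rapid decay of $b_{k,\ell}$ make this routine but worth stating), and note that $K\le T$ suffices because $\supp\Phi$ is compact in the interior of $TQ\times TQ$, so the $T$-periodization of $\Phi$ agrees with $\Phi$ on all of $TQ\times TQ$.
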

%%==========================================

\begin{proof}
%We write 
%$(\alpha_{^mu})$ and 
%$(\alpha^{\prime}_{\nu})$ to denote 
%arbitrary sequences consisting of $0$ or $1$. 
%
We first consider the case where 
$\Phi $ is the product form 
$\Phi (\xi, \eta)=u(\xi) v(\eta)$ with 
$u, v \in C_{0}^{\infty}(\R^n)$. 
In this case, we shall prove that 
if $T$ is a positive number 
satisfying 
$\supp u , \supp v \subset TQ$ then 
there exists a constant 
$c_{n, K, T}$ depending 
only on 
$n, K, T$ such that 
\begin{equation}\label{product}
\bigg\|
\sum_{\mu, \nu \in \Z^n} 
%\alpha_{\mu}
%\alpha^{\prime}_{\nu}
u (\xi- \mu) v(\eta- \nu) 
\sigma_{\mu, \nu}
\bigg\|_{\calM (L^2 \times L^2 \to X)} 
\le c_{n, K, T} \|u\|_{L^{\infty}} \|v\|_{L^{\infty}}M.  
\end{equation}

We take 
a positive integer 
$N$ satisfying 
$N>(T+K)/2$ and we define the equivalence relation 
for 
$\mu, \nu \in \Z^n$ by 
\[
\mu \equiv \nu \; \Leftrightarrow \; 
N^{-1}(\mu - \nu) \in \Z^n. 
\]
Then from the assumptions on the supports 
of $\sigma_{\mu, \nu}$, $u$, and $v$, 
and from our choice of $N$, 
we see that 
\begin{equation*}
\mu\equiv \mu^{\prime}, 
\; 
\nu\equiv \nu^{\prime}, 
\; 
u(\xi - \mu^{\prime})v(\eta - \nu^{\prime})
\sigma_{\mu, \nu} (\xi, \eta) \neq 0, 
\; 
%\\
%&
%\Rightarrow \; 
%\mu - \mu^{\prime} \in (T+K)Q, 
%\; 
%\nu - \nu^{\prime} \in (T+K)Q,
%\; 
%\mu\equiv \mu^{\prime}, 
%\;  
%\nu\equiv \nu^{\prime} 
\Rightarrow \; 
\mu = \mu^{\prime}, \; 
\nu = \nu^{\prime}. 
\end{equation*}
Hence, 
for each fixed 
$j, k \in \{0, \dots, N-1\}^{n}$, 
we can write 
\begin{align*}
&
\sum_{\mu \equiv j} 
\sum_{\nu \equiv k}
%\alpha_{\mu} \alpha^{\prime}_{\nu} 
u(\xi - \mu)v(\eta - \nu)
\sigma_{\mu, \nu} (\xi, \eta) 
\\
&
=
\bigg(\sum_{\mu \equiv j} 
\sum_{\nu \equiv k}
\sigma_{\mu, \nu} (\xi, \eta) 
\bigg)
\bigg( \sum_{\mu^{\prime}\equiv j } 
%\alpha_{\mu^{\prime}} 
u(\xi - \mu^{\prime})\bigg)
\bigg(\sum_{\nu^{\prime}\equiv k} 
%\alpha^{\prime}_{\nu^{\prime}} 
v(\eta - \nu^{\prime})
\bigg)  
\end{align*}
and hence 
\begin{align*}
&
\bigg\|
\sum_{\mu \equiv j} 
\sum_{\nu \equiv k}
%\alpha_{\mu} \alpha^{\prime}_{\nu} 
u(\xi - \mu^{\prime})v(\eta - \nu^{\prime})
\sigma_{\mu, \nu} (\xi, \eta)  
\bigg\|_{\calM (L^2 \times L^2 \to X)} 
\\
&
\le 
\bigg\|
\sum_{\mu \equiv j} 
\sum_{\nu \equiv k}
\sigma_{\mu, \nu} 
\bigg\|_{\calM (L^2\times L^2 \to X)}
\bigg\|\sum_{\mu^{\prime}\equiv j} 
%\alpha_{\mu^{\prime}} 
u(\xi - \mu^{\prime}) 
\bigg\|_{\calM (L^2 \to L^2)}
\bigg\|\sum_{\nu^{\prime}\equiv k} 
%\alpha^{\prime}_{\nu^{\prime}} 
v(\eta - \nu^{\prime}) 
\bigg\|_{\calM (L^2 \to L^2)}. 
\end{align*}
The $\calM (L^2 \times L^2 \to X)$ norm 
on the right hand side does not exceed 
$M$ by the assumption \eqref{unconditionalaaa}. 
For the two 
$\calM (L^2 \to L^2)$ norms,  
Plancherel's theorem 
and the assumptions on the supports of 
$u$ and $v$ yield 
\begin{align*}
&
\bigg\|\sum_{\mu^{\prime}\equiv j} 
%\alpha_{\mu^{\prime}} 
u(\xi - \mu^{\prime}) 
\bigg\|_{\calM (L^2 \to L^2)}
=
\bigg\|\sum_{\mu^{\prime}\equiv j } 
%\alpha_{\mu^{\prime}} 
u(\xi - \mu^{\prime}) 
\bigg\|_{L^{\infty}}
\le 
c_{n, T} 
\|u\|_{L^{\infty}}, 
\\
&
\bigg\|\sum_{\nu^{\prime}\equiv k} 
%\alpha^{\prime}_{\nu^{\prime}} 
v(\eta - \nu^{\prime}) 
\bigg\|_{\calM (L^2 \to L^2)}
=
\bigg\|\sum_{\nu^{\prime}\equiv k} 
%\alpha^{\prime}_{\nu^{\prime}} 
v(\eta - \nu^{\prime}) 
\bigg\|_{L^{\infty}}
\le c_{n, T}
\|v\|_{L^{\infty}}.  
\end{align*}
Thus we obtain 
\begin{equation*} 
\bigg\|
\sum_{\mu \equiv j} 
\sum_{\nu \equiv k}
%\alpha_{\mu} \alpha^{\prime}_{\nu} 
u(\xi - \mu^{\prime})v(\eta - \nu^{\prime})
\sigma_{\mu, \nu} (\xi, \eta)  
\bigg\|_{\calM (L^2 \times L^2 \to X)} 
\le 
c_{n,T}
\|u\|_{L^{\infty}}
\|v\|_{L^{\infty}}
M. 
\end{equation*}
Summing over $(j,k)$, 
we obtain 
\eqref{product}.

The case of general 
$\Phi \in C_{0}^{\infty}(\R^n \times \R^n)$ can be deduced 
from the case of product $\Phi$ by the use of 
the expansion \eqref{FourierseriesExp} 
(see the last part of Section \ref{sectionlecalB}). 
Lemma \ref{unconditionaltoPhi} is proved. 
\end{proof}

%%==========================================
\begin{lem}\label{unconditionaltoaverage}
Suppose $\sigma_{\mu, \nu}$, $K$, and $M$ satisfy the 
assumptions (i), (ii), (iii) of Lemma 
\ref{unconditionaltoPhi}. 
Set 
\begin{equation*}
\langle \sigma_{\mu, \nu} \rangle 
=
\iint_{\R^n \times \R^n} 
\sigma_{\mu, \nu} (\xi, \eta)\, 
d\xi d\eta  
\end{equation*}
and define the $L^{\infty}$ matrix $B$ by 
$
B=(\langle \sigma_{\mu, \nu} \rangle )_{\mu, \nu \in \Z^n}$. 
Then 
for every 
$\Phi \in C_{0}^{\infty}(\R^n\times \R^n)$ 
there 
exists a constant 
$c$ depending only on 
$n, K, \Phi$ such that 
\begin{equation*}%\label{averagePhibound}
\|
\sigma_{B, \Phi}
\|_{\calM (L^2 \times L^2 \to X)} 
\le c M
\end{equation*}
\end{lem}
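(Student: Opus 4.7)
The plan is to bootstrap from Lemma \ref{unconditionaltoPhi} by averaging over translations, in the spirit of Grafakos--Kalton \cite{GK-Marcinkiewicz}. The key observation is that $\langle\sigma_{\mu,\nu}\rangle$ equals the integral of $\sigma_{\mu,\nu}(\xi+\,\cdot\,,\eta+\,\cdot\,)$ over a sufficiently large cube, while translating each $\sigma_{\mu,\nu}$ by a common vector preserves assumptions (i), (ii), (iii) (the last via Lemma \ref{translation}).

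Concretely, I would first choose $\tilde K>0$, depending only on $K$ and the size of $\supp\Phi$, so large that for every $(\xi,\eta)$ and $(\mu,\nu)$ with $\Phi(\xi-\mu,\eta-\nu)\neq 0$, the support of $(s,t)\mapsto\sigma_{\mu,\nu}(\xi+s,\eta+t)$ is contained in $\tilde K Q\times\tilde K Q$. A change of variable then gives
\[
\iint_{\tilde K Q\times\tilde K Q}\sigma_{\mu,\nu}(\xi+s,\eta+t)\,ds\,dt
=\iint_{\R^n\times\R^n}\sigma_{\mu,\nu}(\xi',\eta')\,d\xi'\,d\eta'=\langle\sigma_{\mu,\nu}\rangle
\]
whenever $\Phi(\xi-\mu,\eta-\nu)\neq 0$, so that, setting $\tau^{(s,t)}(\xi,\eta):=\sum_{\mu,\nu}\Phi(\xi-\mu,\eta-\nu)\sigma_{\mu,\nu}(\xi+s,\eta+t)$,
\[
\sigma_{B,\Phi}(\xi,\eta)=\iint_{\tilde K Q\times\tilde K Q}\tau^{(s,t)}(\xi,\eta)\,ds\,dt.
\]

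Next, I would fix $(s,t)\in\tilde K Q\times\tilde K Q$ and apply Lemma \ref{unconditionaltoPhi} to the translated family $\sigma^{(s,t)}_{\mu,\nu}(\xi,\eta):=\sigma_{\mu,\nu}(\xi+s,\eta+t)$. These functions satisfy (i) trivially and (ii) with $K$ replaced by $K+\tilde K$; and (iii) is preserved with the same $M$, since for every finite choice of $\alpha_\mu,\alpha'_\nu\in\{0,1\}$, Lemma \ref{translation} gives
\[
\Bigl\|\sum_{\mu,\nu}\alpha_\mu\alpha'_\nu\sigma^{(s,t)}_{\mu,\nu}\Bigr\|_{\calM(L^2\times L^2\to X)}
=\Bigl\|\sum_{\mu,\nu}\alpha_\mu\alpha'_\nu\sigma_{\mu,\nu}\Bigr\|_{\calM(L^2\times L^2\to X)}\le M.
\]
Lemma \ref{unconditionaltoPhi} therefore yields a constant $c=c(n,K,\Phi)$, independent of $(s,t)$, with $\|\tau^{(s,t)}\|_{\calM(L^2\times L^2\to X)}\le cM$.

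Finally, I would conclude via a generalized Minkowski inequality. Because $T_{\sigma_{B,\Phi}}(f,g)=\iint_{\tilde K Q\times\tilde K Q}T_{\tau^{(s,t)}}(f,g)\,ds\,dt$ by Fubini (valid for Schwartz $f,g$ in view of the compact support of $\tau^{(s,t)}$ in $(\xi,\eta)$), taking the $X=(L^2,\ell^\infty)$ norm and interchanging it with the integral gives
\[
\|T_{\sigma_{B,\Phi}}(f,g)\|_X\le\iint_{\tilde K Q\times\tilde K Q}\|T_{\tau^{(s,t)}}(f,g)\|_X\,ds\,dt\le cM\tilde K^{2n}\|f\|_{L^2}\|g\|_{L^2}.
\]
The step most prone to bookkeeping errors is the choice of $\tilde K$ in the integral representation: it must be large enough that the truncated integral coincides with the full integral $\langle\sigma_{\mu,\nu}\rangle$ precisely on the support of $\Phi(\xi-\mu,\eta-\nu)$. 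Everything else is a direct appeal to Lemmas \ref{translation} and \ref{unconditionaltoPhi}.
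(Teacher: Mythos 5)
Your proposal is correct and follows essentially the same path as the paper's proof: translate by $(s,t)$, observe via Lemma~\ref{translation} that hypothesis (iii) survives translation, apply Lemma~\ref{unconditionaltoPhi} to get a bound uniform in $(s,t)$ over the relevant compact set of translations, and integrate using the triangle inequality for the Banach norm of $X$. The only cosmetic difference is that the paper phrases the uniform bound as a pointwise estimate in $(\xi_0,\eta_0)$ multiplied by an indicator of $(T+K)Q\times(T+K)Q$ and then integrates over all of $\R^n\times\R^n$, whereas you truncate the domain of integration to a cube $\tilde KQ\times\tilde KQ$ from the outset; the bookkeeping (with $\tilde K=T+K$ and the translated support constant $K+\tilde K=T+2K$) comes out the same either way.
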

%%==========================================

\begin{proof} 
By Lemma \ref{translation}, we have 
\begin{equation}\label{AAA}
\bigg\|
\sum_{\mu, \nu} \alpha_{\mu} 
\alpha^{\prime}_{\nu} 
\sigma_{\mu, \nu} (\xi + \xi_0, \eta + \eta_0) 
\bigg\|_{\calM (L^2 \times L^2 \to X)}
\le M
\end{equation}
for all 
sequences 
$(\alpha_{\mu})$ and 
$(\alpha^{\prime}_{\nu})$ consisting of $0$ or $1$ 
and for 
all $\xi_0, \eta_{0} \in \R^n$.

Take an arbitrary function 
$\Phi \in C_{0}^{\infty}(\R^n \times \R^n)$ and 
take a $T\in (0, \infty)$ such that 
$\supp \Phi \subset TQ \times TQ$. 
Then the estimate 
\begin{equation}\label{BBB}
\begin{aligned}
&
\bigg\|
\sum_{\mu, \nu} 
%\delta_{\mu} 
%\delta^{\prime}_{\nu} 
\Phi (\xi - \mu, \eta - \nu)
\sigma_{\mu, \nu} (\xi + \xi_0, \eta + \eta_0) 
\bigg\|_{\calM (L^2 \times L^2 \to X)}
\\
&
\le c_{n,K, \Phi} M 
\ichi_{(T+K)Q}(\xi_0)
\ichi_{(T+K)Q}(\eta_0)
\end{aligned}
\end{equation}
holds for all $\xi_0, \eta_0 \in \R^n$. 
In fact, 
if $\xi_0 \not\in (T+K)Q$ or 
$\eta_0 \not\in (T+K)Q$, then 
the function on the left hand side of 
\eqref{BBB} is identically equal to $0$ and the estimate is obvious. 
If $\xi_0 \in (T+K)Q$ and $\eta_0 \in (T+K)Q$, then 
the support of the function 
$\sigma_{\mu, \nu} (\xi+\xi_0, \eta + \eta_0)$ is 
included 
in $(\mu, \nu)+ (T+2K)Q\times (T+2K)Q$ and  
we obtain \eqref{BBB} from 
\eqref{AAA} and Lemma \ref{unconditionaltoPhi}.

Now since the norm in 
$X=(L^2, \ell^{\infty})$ is a Banach-space norm 
(satisfying the triangular inequality), 
we take the 
integral of \eqref{BBB} over 
$\xi_0, \eta_0 \in \R^n$ to 
obtain the desired inequality. 
\end{proof}

%%==========================================
\begin{lem}\label{makeuncondtional}
For each $\Phi \in C_{0}^{\infty}(\R^n\times \R^n)$, 
there exists a constant 
$c$ depending only on $n$ and $\Phi$ such that 
the 
inequality 
\begin{equation*}
\bigg\|
\sum_{\mu, \nu \in \Z^n} 
\alpha_{\mu}
\alpha^{\prime}_{\nu}
\Phi (\xi- \mu, \eta- \nu) 
\sigma (\xi, \eta)
\bigg\|_{\calM (L^2 \times L^2 \to X)} 
\le c
\big\|
\sigma
\big\|_{\calM (L^2 \times L^2 \to X)} 
\end{equation*}
holds 
for all 
$\sigma \in L^{\infty}(\R^n \times \R^n)$ and 
all sequences 
$(\alpha_{\mu})$ and 
$(\alpha^{\prime}_{\nu})$ consisting of 
$0$ and $1$. 
\end{lem}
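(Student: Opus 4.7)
The plan is to reduce to the tensor-product case $\Phi(\xi,\eta)=u(\xi)v(\eta)$ using the Fourier expansion \eqref{FourierseriesExp}, and then to exploit the fact that a bilinear symbol that factors as $m_1(\xi)m_2(\eta)\sigma(\xi,\eta)$ behaves well under the $\calM(L^2\times L^2 \to X)$ norm: multiplication of the symbol by a bounded function of a single variable amounts to precomposing the bilinear operator with a linear $L^2\to L^2$ multiplier. The only tool needed is the identity
\[
T[m(\xi)\sigma(\xi,\eta)](f,g) = T[\sigma]\bigl(\calF^{-1}(m\widehat{f}),g\bigr),
\]
which together with Plancherel yields
\[
\|m(\xi)\sigma(\xi,\eta)\|_{\calM(L^2\times L^2 \to X)} \le \|m\|_{L^\infty}\|\sigma\|_{\calM(L^2\times L^2\to X)},
\]
and likewise for multiplication by an $L^\infty$ function of $\eta$ alone.

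Next I would handle the product case $\Phi=u\otimes v$ with $u,v\in C_0^\infty(\R^n)$. In this case
\[
\sum_{\mu,\nu}\alpha_\mu\alpha'_\nu\, u(\xi-\mu)v(\eta-\nu) = U(\xi)\,V(\eta),
\]
where $U(\xi)=\sum_\mu \alpha_\mu u(\xi-\mu)$ and $V(\eta)=\sum_\nu \alpha'_\nu v(\eta-\nu)$. Since $u$ is compactly supported, only boundedly many terms $u(\xi-\mu)$ are nonzero for each $\xi$, so $\|U\|_{L^\infty}\le C_u\|u\|_{L^\infty}$; crucially, this bound uses only $|\alpha_\mu|\le 1$, not the restriction $\alpha_\mu\in\{0,1\}$. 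The same holds for $V$. Two applications of the factorization above (one for the $\xi$-factor, one for the $\eta$-factor) then give the desired inequality in the tensor-product case, with constant $c=C_{u,v}\|u\|_{L^\infty}\|v\|_{L^\infty}$.

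For general $\Phi\in C_0^\infty(\R^n\times\R^n)$ I would invoke the expansion \eqref{FourierseriesExp}, choosing $T>0$ with $\supp\Phi\subset 2^{-1}TQ\times 2^{-1}TQ$ and $\phi\in C_0^\infty(TQ)$ with $\phi\equiv 1$ on $2^{-1}TQ$. Substituting and relabelling gives
\[
\sum_{\mu,\nu}\alpha_\mu\alpha'_\nu\Phi(\xi-\mu,\eta-\nu)=\sum_{k,\ell}b_{k,\ell}\,e^{2\pi i k\cdot\xi/T}e^{2\pi i \ell\cdot\eta/T}U_k(\xi)V_\ell(\eta),
\]
where $U_k(\xi)=\sum_\mu(\alpha_\mu e^{-2\pi i k\cdot\mu/T})\phi(\xi-\mu)$ and $V_\ell$ is defined analogously. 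The coefficients $\alpha_\mu e^{-2\pi i k\cdot\mu/T}$ still have modulus at most $1$, and the unimodular factors $e^{2\pi i k\cdot\xi/T}$, $e^{2\pi i\ell\cdot\eta/T}$ have $L^\infty$ norm $1$, so the tensor-product step gives a bound for each $(k,\ell)$ that is independent of $k,\ell$. Summing against the rapidly decreasing coefficients $b_{k,\ell}$ finishes the proof. The main point worth checking—and the only subtlety—is that the tensor-product step indeed goes through for arbitrary coefficients of modulus at most $1$, not only for $\{0,1\}$-valued ones; this is what allows the Fourier-series reduction from general $\Phi$ to products to yield a uniform bound.
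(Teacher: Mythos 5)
Your proof is correct and follows essentially the same route as the paper: treat the tensor-product case $\Phi=u\otimes v$ by factoring out linear $L^2\to L^2$ multipliers via $T[m(\xi)\sigma](f,g)=T[\sigma](\calF^{-1}(m\widehat f),g)$ and the compact-support estimate $\|\sum_\mu\alpha_\mu u(\cdot-\mu)\|_{L^\infty}\lesssim\|u\|_{L^\infty}$, then reduce general $\Phi$ via the Fourier expansion \eqref{FourierseriesExp}. The only bookkeeping difference from the paper's (implicit) reduction is that you absorb the phase $e^{-2\pi i k\cdot\mu/T}$ into the coefficients rather than into the bump $\phi$, which is why you need — and correctly observe — that the tensor-product step only requires $|\alpha_\mu|\le 1$ rather than $\alpha_\mu\in\{0,1\}$; this is a harmless and in fact clarifying variant of the argument the paper leaves to the reader.
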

%%==========================================

\begin{proof} 
Let $(\alpha_{\mu})$ and 
$(\alpha^{\prime}_{\nu})$ be 
arbitrary sequences consisting of $0$ and $1$. 
If $\Phi$ is of the form 
$\Phi (\xi, \eta)=u(\xi) v(\eta)$, 
then 
we can write 
\begin{equation*}
\sum_{\mu, \nu}
\alpha_{\mu}
\alpha^{\prime}_{\nu}
u(\xi-\mu) v(\eta-\nu ) 
\sigma (\xi, \eta)
=
\bigg(\sum_{\mu}
\alpha_{\mu} 
u(\xi -\mu)\bigg)
\bigg(\sum_{\nu}
\alpha^{\prime}_{\nu} 
v(\xi -\mu)\bigg)
\sigma (\xi, \eta). 
\end{equation*} 
Hence, if $T$ is a positive number 
satisfying 
$\supp u, \supp v \subset TQ$,  
then 
\begin{align*}
&\bigg\|
\sum_{\mu, \nu}
\alpha_{\mu}
\alpha^{\prime}_{\nu}
u(\xi-\mu) v(\eta-\nu ) 
\sigma (\xi, \eta)
\bigg\|_{\calM (L^2 \times L^2 \to X)}
\\
&
\le 
\bigg\|\sum_{\mu}
\alpha_{\mu} u(\xi -\mu)\bigg\|_{\calM (L^2 \to L^2)}
\bigg\|\sum_{\nu}\alpha^{\prime}_{\nu} v(\xi -\mu)\bigg\|_{\calM (L^2 \to L^2)}
\|\sigma (\xi, \eta)\|_{\calM (L^2 \times L^2 \to X)}
\\
&
\le c_{n,T}
\|u\|_{L^{\infty}}
\|v\|_{L^{\infty}}
\|\sigma (\xi, \eta)\|_{\calM (L^2 \times L^2 \to X)}. 
\end{align*}
The case of general $\Phi \in C_{0}^{\infty}(\R^n \times \R^n)$ 
can be reduced to the case of the above $\Phi$ 
by the use of Fourier expansion 
as in the last part of  
Section \ref{sectionlecalB}.  
\end{proof}

%%============================
\begin{lem}\label{APhitoAtildePhi}
Let 
$\Phi, \widetilde{\Phi} 
\in C_{0}^{\infty}(\R^n \times \R^n)$ and suppose 
$\Phi$ satisfies the condition (A). 
Then there exists a constant $c$ depending only on 
$n, \Phi$, and $\widetilde{\Phi}$ such that 
\begin{equation*}
\|
\sigma_{A, \widetilde{\Phi}}
\|_{\calM (L^2 \times L^2 \to X)} 
\le c 
\|
\sigma_{A, \Phi}
\|_{\calM (L^2 \times L^2 \to X)} 
\end{equation*}
for all 
$L^{\infty}$ matrices $A=(a_{\mu, \nu})$. 
\end{lem}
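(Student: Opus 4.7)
The plan is to combine the three preceding lemmas, using condition (A) to extract the entries of $A$ as the averages $\langle\sigma_{\mu,\nu}\rangle$ appearing in Lemma \ref{unconditionaltoaverage}. Since $\Phi\in C_0^{\infty}(\R^n\times\R^n)$ satisfies condition (A), take the associated $\Theta\in C_0^{\infty}(\R^n\times\R^n)$ so that
\[
\iint \Theta(\xi,\eta)\,\Phi(\xi-\mu,\eta-\nu)\,d\xi\,d\eta
=
\begin{cases} 1 & (\mu,\nu)=0,\\ 0 & (\mu,\nu)\in\Z^n\times\Z^n\setminus\{0\}.\end{cases}
\]
For each $(\mu,\nu)\in\Z^n\times\Z^n$, define
\[
\sigma_{\mu,\nu}(\xi,\eta)=\Theta(\xi-\mu,\eta-\nu)\,\sigma_{A,\Phi}(\xi,\eta).
\]
Since $\sigma_{A,\Phi}\in L^{\infty}$ and $\Theta$ has compact support, conditions (i) and (ii) of Lemma \ref{unconditionaltoPhi} hold, with a constant $K$ depending only on $\supp\Theta$.

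Next I would verify the unconditional bound (iii). For any sequences $(\alpha_{\mu}),(\alpha'_{\nu})$ of $0$'s and $1$'s,
\[
\sum_{\mu,\nu\in\Z^n}\alpha_{\mu}\alpha'_{\nu}\sigma_{\mu,\nu}(\xi,\eta)
=\Bigl(\sum_{\mu,\nu\in\Z^n}\alpha_{\mu}\alpha'_{\nu}\Theta(\xi-\mu,\eta-\nu)\Bigr)\sigma_{A,\Phi}(\xi,\eta).
\]
Applying Lemma \ref{makeuncondtional} with $\Theta$ in place of $\Phi$ and $\sigma_{A,\Phi}$ in place of $\sigma$, we obtain
\[
\Bigl\|\sum_{\mu,\nu}\alpha_{\mu}\alpha'_{\nu}\sigma_{\mu,\nu}\Bigr\|_{\calM(L^2\times L^2\to X)}
\le c_1 M_0,\qquad M_0:=\|\sigma_{A,\Phi}\|_{\calM(L^2\times L^2\to X)},
\]
with $c_1$ depending only on $n$ and $\Theta$. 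Thus condition (iii) of Lemma \ref{unconditionaltoPhi} holds with $M=c_1M_0$.

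The key computation is that the averages $\langle\sigma_{\mu,\nu}\rangle$ recover $A$. Expanding and changing variables $x=\xi-\mu$, $y=\eta-\nu$,
\[
\langle\sigma_{\mu,\nu}\rangle
=\sum_{\mu',\nu'}a_{\mu',\nu'}\iint\Theta(x,y)\,\Phi\bigl(x-(\mu'-\mu),\,y-(\nu'-\nu)\bigr)\,dx\,dy
=a_{\mu,\nu},
\]
by condition (A) applied with $\alpha=(\mu'-\mu,\nu'-\nu)\in\Z^{2n}$. So the matrix $B$ of Lemma \ref{unconditionaltoaverage} equals $A$. Applying that lemma with the given $\widetilde{\Phi}$ yields
\[
\|\sigma_{A,\widetilde\Phi}\|_{\calM(L^2\times L^2\to X)}
=\|\sigma_{B,\widetilde\Phi}\|_{\calM(L^2\times L^2\to X)}
\le c_2 M= c_1c_2\,\|\sigma_{A,\Phi}\|_{\calM(L^2\times L^2\to X)},
\]
with $c_2$ depending only on $n,K,\widetilde\Phi$, which is the desired inequality.

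The only genuinely delicate point is the unconditional estimate (iii): one must bound sums with arbitrary $0/1$ signs by the operator norm of $\sigma_{A,\Phi}$ itself. This is handled cleanly by Lemma \ref{makeuncondtional}, so no further obstacle arises; the rest is the straightforward application of Lemmas \ref{unconditionaltoPhi} and \ref{unconditionaltoaverage} once the identity $\langle\sigma_{\mu,\nu}\rangle=a_{\mu,\nu}$, which is precisely what condition (A) was introduced to supply, has been verified.
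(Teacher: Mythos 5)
Your proposal is correct and follows essentially the same route as the paper's proof: both set $\sigma_{\mu,\nu}(\xi,\eta)=\Theta(\xi-\mu,\eta-\nu)\,\sigma_{A,\Phi}(\xi,\eta)$ with $\Theta$ coming from condition (A), verify the unconditional estimate via Lemma \ref{makeuncondtional}, and then apply Lemma \ref{unconditionaltoaverage}, using condition (A) to see that $\langle\sigma_{\mu,\nu}\rangle=a_{\mu,\nu}$. The paper phrases that last step via an auxiliary kernel $R(\alpha,\beta)$ and a matrix $\widetilde A$, but this is the same computation you carry out directly.
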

%%============================

\begin{proof}
Let 
$A=(a_{\mu, \nu})_{\mu, \nu \in \Z^n}$ be an arbitrary 
$L^{\infty}$ matrix. 
Take a function  
$\Theta\in C_{0}^{\infty}(\R^n \times \R^n)$. 
By Lemma \ref{makeuncondtional} we have 
\begin{equation*}
\bigg\|
\sum_{\mu, \nu} 
\alpha_{\mu} \alpha^{\prime}_{\nu} 
\Theta (\xi - \mu, \eta - \nu)
\sigma_{A, \Phi} (\xi, \eta)
\bigg\|_{\calM (L^2 \times L^2 \to X)}
\le c_{n, \Theta} 
\|\sigma_{A, \Phi}\|_{\calM (L^2 \times L^2 \to X)} 
\end{equation*}
for all sequences $(\alpha_{\mu})$ and 
$(\alpha^{\prime}_{\nu})$ consisting of $0$ and $1$. 
Hence 
Lemma \ref{unconditionaltoaverage} implies 
\begin{equation}\label{tildeA}
\|\sigma_{\widetilde{A}, \widetilde{\Phi}}
\|_{\calM (L^2 \times L^2 \to X)}
\le c_{n, \Theta, \widetilde{\Phi}} 
\|\sigma_{A, \Phi}\|_{\calM (L^2 \times L^2 \to X)} ,   
\end{equation}
where $\widetilde{A}=(\widetilde{a}_{\mu, \nu})$ with 
\begin{align*}
&
\widetilde{a}_{\mu, \nu}=
\iint_{\R^n \times \R^n} 
\Theta (\xi-\mu, \eta-\nu) 
\sigma_{A, \Phi}(\xi, \eta)\, 
d\xi d\eta
\\
&
=\sum_{\mu^{\prime}, \nu^{\prime}}
a_{\mu^{\prime}, \nu^{\prime}} 
\iint_{\R^n \times \R^n} 
\Theta (\xi-\mu, \eta-\nu) 
\Phi (\xi - \mu^{\prime}, \eta - \nu^{\prime})\, 
d\xi d\eta. 
\end{align*}
If we define 
\[
R (\alpha, \beta )
=
\iint_{\R^n \times \R^n} 
\Theta (\xi, \eta) 
\Phi (\xi - \alpha, \eta - \beta)\, 
d\xi d\eta , 
\quad \alpha, \beta \in \Z^n, 
\]
then 
we have 
\[
\widetilde{a}_{\mu, \nu}
=
\sum_{\mu^{\prime}, \nu^{\prime}}
a_{\mu^{\prime}, \nu^{\prime}} 
R(\mu^{\prime}- \mu, \nu^{\prime}- \nu).  
\]
Since $\Phi$ satisfies the condition (A), 
we can choose the 
function $\Theta \in C_{0}^{\infty}(\R^n \times \R^n)$ so that we have 
\[
R(\alpha, \beta) = 
\begin{cases}
1 & \text{if}\;\; (\alpha, \beta)=(0,0)
\\
0 & \text{if}\;\; (\alpha, \beta)\neq (0,0). 
\end{cases}
\]
With this choice of $\Theta$, we have 
$\widetilde{a}_{\mu, \nu}=a_{\mu, \nu}$ and 
\eqref{tildeA} is the desired inequality. 
\end{proof}
\vs

Using Lemma \ref{APhitoAtildePhi}, we can complete the 
proof of 
Theorem \ref{main} (2). 
If $\Phi$ satisfies the condition (A), 
then by Lemma \ref{APhitoAtildePhi} the inequality 

\[
\|
\sigma_{A, \widetilde{\Phi}}
\|_{\calM (L^2 \times L^2 \to X)}
\le c_{n, \Phi, \widetilde{\Phi}} 
\|
\sigma_{A, \Phi}
\|_{\calM (L^2 \times L^2 \to X)}
\]
holds 
for any $\widetilde{\Phi}\in C_{0}^{\infty}(\R^n \times \R^n)$. 
In particular 
the above inequality 
holds for the special function 
$\widetilde{\Phi}(\xi, \eta)= \phi (\xi)\phi(\eta)$ that 
was treated in 
Subsection \ref{subsectionproduct}. 
For this $\widetilde{\Phi}$, 
we have proved 
$\|
\sigma_{A, \widetilde{\Phi}}
\|_{\calM (L^2 \times L^2 \to X)} \ge c^{-1} \|A\|_{\calB}$, 
which combined with the above inequality implies 
the same lower bound 
for 
$\|\sigma_{A, \Phi}\|_{\calM (L^2 \times L^2 \to X)}$. 
Thus Theorem \ref{main} (2) is proved and the proof of Theorem \ref{main} 
is complete.

%%%%%%%%%%%%%%%%%%%%%%%%%%%%%%%%%%%%%%%%%%%%%%%%
\section{Remarks on the condition (A)}\label{examples}
%%%%%%%%%%%%%%%%%%%%%%%%%%%%%%%%%%%%%%%%%%%%%%%%
%%==============================================
In this section, we give 
examples of $\Phi \in C_{0}^{\infty}(\R^{d})$ 
that satisfy or do not satisfy the condition (A).

First, we give an example of $\Phi \in C_{0}^{\infty}(\R^{d})$ 
that satisfies the condition (A).  
This example is essentially the same as the function 
considered in \cite[Lemma 6.3]{GK-Marcinkiewicz} 
in a slightly different situation. 

First consider the case $d=1$. 
Let $\phi$ be a function on $\R$ such that
\begin{equation}\label{phi}
\phi \in C_{0}^{\infty}(\R), 
\quad 
\supp \phi \subset [-1, 1],  
\quad 
\phi (0) \neq 0. 
\end{equation}
This $\phi$ satisfies the condition (A). 
In fact, 
it is easy to see that 
the three functions 
$\phi (x-j)$,  $j=-1, 0, 1$, 
are 
linearly independent on the open interval $(-1,1)$. 
This means that the linear functionals 
\[
C_{0}^{\infty}((-1,1)) \ni \theta  
\mapsto 
\int_{-1}^{1} \theta (x) \phi (x - j ) \, dx \in \C, 
\quad 
j=-1, 0, 1, 
\] 
are linearly independent and hence, 
by an elementary fact of linear algebra,  
there exists a function $\theta \in C_{0}^{\infty}((-1,1))$ 
such that 
\begin{equation*}%\label{theta}
\int_{-1}^{1} \theta (x) \phi (x - j)\, dx 
=
\begin{cases}
1 & \text{for}\;\; j =0
\\
0 & \text{for}\;\; j=-1, 1. 
\end{cases}
\end{equation*}
But the integral on the left hand side is 
also equal to $0$ for all $j\in \Z \setminus \{-1, 0, 1\}$ 
since $\phi (\cdot - j)=0$ on $(-1,1)$ for those $j$.  
Thus 
$\theta$ has the property required for the condition (A).

For $d\ge 2$, it is easy to see that 
the function $\Phi$ defined by 
\[
\Phi (x)=\phi (x_1) \cdots \phi (x_d), 
\quad x=(x_1, \dots, x_d) \in \R^d, 
\]
with the function $\phi$ of \eqref{phi}  
satisfies the condition (A).

Next we give an example of nonzero function 
that does not satisfy the condition (A).

We first consider the case $d=1$. 
Let $\phi$ be the function of \eqref{phi} and 
set 
\[
\widetilde{\phi} (x) = \phi (x) + \phi (x-1). 
\]
Then 
\[
\sum_{k\in \Z,\; k\, \text{even}}
\widetilde{\phi} (x-k)
=
\sum_{k\in \Z,\; k\, \text{odd}}
\widetilde{\phi} (x-k)
\]
and hence 
\[
\sum_{k\in \Z} (-1)^{k}
\int_{\R} 
\theta (x) \widetilde{\phi} (x-k)\, dx=0 
\]
for all $\theta \in C_{0}^{\infty}(\R)$. 
Hence  
$\widetilde{\phi}$ does not 
satisfy the condition (A).

For $d\ge 2$, 
the function $\widetilde{\Phi}$ defined by 
\[
\widetilde{\Phi} (x)=\widetilde{\phi} (x_1) \cdots \widetilde{\phi} (x_d), 
\quad x=(x_1, \dots, x_d) \in \R^d, 
\]
satisfies 
\begin{equation*}%\label{pm1}
\sum_{\alpha \in \Z^d} 
(-1)^{\alpha_1 + \cdots + \alpha_d}
\widetilde{\Phi} (x - \alpha )
=0
\end{equation*}
and hence 
$\widetilde{\Phi}$ 
does not satisfy the condition (A). 
%%==============================================

%%%===============================

\end{document}